\definecolor{corange}{rgb}{0.93, 0.57, 0.13}
\newtheorem{exa}{\bf Example}
\newcommand{\bs}[1]{\boldsymbol{#1}}
\newtheorem{lem}{Lemma}[section]
\def \bx{\bs x}
\def \by{\bs y}
\def\B{\mathbb{B}}
\def\E{\mathbb{E}}
\def\tphi{\Phi^{\ast}}
\def\hphi{\widehat{\Phi}}
\def\Q{\widetilde{Q}}
\def\d{\mathrm{d}}
\def\V{\mathrm{Var}}
\def\oa{\omega^{\frac{\alpha}{2}}}
\def\oaa{\omega^{-\frac{\alpha}{2}}}
\def\Ia{\mathcal{I}_{N_x}^{\frac{\alpha}2}}
\def\lja{l^{\frac{\alpha}{2}}_j(x)}
\def\Ja{\mathcal{J}^{\frac{\alpha}2}}
\def\fna {\mathcal{F}^{\frac{\alpha}{2}}_{N_x}}
\def\ina {\mathcal{I}_{N_x}^{\frac{\alpha}2}}
\newcommand \dint {\displaystyle\int}
\crefname{hypothesis}{Hypothesis}{Hypotheses}
\title{Exponentially accurate Spectral Monte Carlo Method for linear PDEs and their error estimates \thanks{Submitted to the editors DATE.
\funding{The research of the second author is partially supported by the National Natural Science Foundation of China (Nos. 12201385 and 12271365) and the Fundamental Research Funds for the Central Universities 2021110474. The research of the third author is partially supported by the Fundamental Research Funds for the Central Universities. ${}^{\star}$Corresponding author. }}}
\author{Jiaying Feng \thanks{School of Mathematics, Shanghai University of Finance and Economics, Shanghai 200433, China. Email: \email{fengjiaying@163.sufe.edu.cn} (J. Feng); \email{ctsheng@sufe.edu.cn} (C. Sheng); \email{xu.chenglong@shufe.edu.cn} (C. Xu).  }
\and  Changtao Sheng$^{\dagger,\star}$\and Chenglong Xu$^{\dagger,\star}$ }
\begin{document}
\nolinenumbers

\maketitle
\begin{abstract}
This paper introduces a spectral Monte Carlo iterative method (SMC) for solving linear Poisson and parabolic equations driven by $\alpha$-stable Lévy process with $\alpha\in (0,2)$, which was initially proposed and developed by Gobet and Maire in their pioneering works (Monte Carlo Methods Appl 10(3-4), 275--285, 2004, and SIAM J Numer Anal 43(3), 1256--1275, 2005) for the case $\alpha=2$. 
The novel method effectively integrates multiple computational techniques, including the interpolation based on generalized Jacobi functions (GJFs), space-time spectral methods, control variates techniques, and a novel walk-on-sphere method (WOS). The exponential convergence of the error bounds is rigorously established through finite iterations for both Poisson and parabolic equations involving the integral fractional Laplacian operator. Remarkably, the proposed space-time spectral Monte Carlo method (ST-SMC) for the parabolic equation is unified for both $\alpha\in(0,2)$ and $\alpha=2$. 
Extensive numerical results are provided to demonstrate the spectral accuracy and efficiency of the proposed method, thereby validating the theoretical findings. 
\end{abstract}

\begin{keywords}
Spectral method, Monte Carlo method, $\alpha$-stable  L$\acute{\text{e}}$vy process, error estimate, generalized Jacobi function
\end{keywords}

\begin{AMS}
65N35, 65C05,  60G52,  65M15,  33C45
\end{AMS}

\section{Introduction}

Spectral methods have gained significant popularity in computational fluid dynamics, quantum mechanics, stochastic differential equations, uncertainty quantification, and more (see, e.g., \cite{Canuto2012,LeMaitre2010,Lord2014,Shen2006,Karniadakis2013,Xiu2010}), primarily due to their ability to provide highly accurate approximations for the solutions of PDEs, as long as these solutions are sufficiently smooth. This is one of the main appealing advantages of spectral methods, the so-called ``spectral accuracy", which is also the goal at the algorithmic level in this paper. 

Stochastic simulation plays a crucial role in understanding complex real-world phenomena, such as options pricing, turbulence, multiphase flows, molecular dynamics simulations, crystal plasticity, radiation transport, radiotherapy, uncertainty quantification and more \cite{Cai2011,Nanbu1980,LeMaitre2010,Bird1996,Glasserman2004,Holmes1996,Ladd1994,Xiu2002,Elfverson2016}, because it encompasses various advantages not typically found in deterministic methods, including being well-suited for solving high-dimensional problems, handling PDEs on complex geometric domains, facilitating parallel computation, and being simpler to implement. The theoretical basis for stochastic methods in solving PDEs resides in the exploitation of a probabilistic representation, namely, the renowned Feynman-Kac formula, which expresses the solution of PDEs as an expectation form. With this, various sampling-based stochastic methods such as the binomial tree method, the multi-tree method, etc., can obtain numerical solutions by directly simulating stochastic processes \cite{Bally2005,Babuska2004,Glasserman2004,Lyons2016,Shao2015,Shao2020}. Unlike direct simulation methods, the walk-on-spheres method employs transition probabilities to determine the next position of the sphere, thereby simulating stochastic processes on the sphere instead of complete trajectories \cite{Muller1956,DeLaurentis1990,Kyprianou2018,Mascagni2004,Yan2021,Ding2022,Ding2023}. This approach notably reduces computational costs and memory demands and offers an efficient and flexible approach.
However, these stochastic methods usually achieve only half-order accuracy, i.e., $\mathcal{O}(N^{-1/2})$, where  $N$ denotes the number of samples. Although it is possible to improve the accuracy to first-order by making specific modifications, achieving higher-order accuracy becomes extremely challenging.

Many efforts have been made to improve the accuracy of stochastic methods for PDEs based on solving backward stochastic differential equations, for instance, the multistep method \cite{Chassagneux2012,GobetMenozzi2010,Han2022,Zhao2010,Komori2017,Brehier2024}, the adapted control variates\cite{Alanko2013,Hutzenthaler2021,Gobet2010,Takahashi2022} and so on. 
Besides, various kinds of higher-order numerical methods, normally second order, have been developed to solve the stochastic PDEs, where high-order typically refers to methods beyond first-order accuracy (cf.\cite{Andersson2019,Xiu2005,Zhao2014,During2012}). However, it is not trivial to directly extend these high-order techniques for solving stochastic PDEs to the approach of simulating the path of a stochastic process that is of interest to this paper.

Gobet and Maire  \cite{Gobet2004} introduced for the first time a spectral Monte Carlo method for the Poisson equation, which combined the idea of iterative variance reduction techniques from sequential Monte Carlo methods with the spectral collocation methods to achieve the Monte Carlo method with spectral accuracy. Subsequently, they updated the proposed method from the perspective of the sequential control variates algorithm, extended it to linear parabolic equations, and proved that the bias and variance of the proposed method decrease geometrically with respect to the number of algorithm steps (cf. \cite{Gobet2005}). The key aspect of those methods lies in the precise reconstruction of the numerical solution from rough data during the iterative process, as well as the terms with different order derivatives of numerical solutions in the governing equation. This accurate reconstruction is achieved by using the connection relationship between derivatives of orthogonal basis functions and their coefficients, thus enabling the SMC method to obtain appealing spectral accuracy.  Later, these methods or other similar approaches were further extended to various problems (see, e.g., \cite{Gobet2010,GobetMenozzi2010,Maire2015,Billaud-Friess2024} and the references therein). 
We also remark that various techniques for improving sampling efficiency in Monte Carlo methods have been discussed in some recent works, interested readers may refer to cubic stratification \cite{Chopin2024} and the Quasi-Monte Carlo method \cite{Longo2021}.

On the other hand, some recent studies on stochastic algorithms for PDEs with integral fractional Laplacian operator (IFL), where the IFL with index $\alpha\in(0,2)$ is defined via the hypersingular integral representation (cf.\! \cite{Nezza2012}):
\begin{equation}\label{fracLap-defn}
(-\Delta)^{\frac{\alpha}2} u(\bx)=C_{d,\alpha}\, {\rm p.v.}\! \int_{\mathbb R^d} \frac{u(\bx)-u(\by)}{|\bx-\by|^{d+\alpha}}\, {\rm d}\by,\quad
C_{d,\alpha} :=\frac{2^{\alpha}\Gamma(\frac{d+\alpha}{2})}{\pi^{\frac{d}2}\Gamma(1-\frac{\alpha}2)},
\end{equation}
with ``p.v." stands for the principal value and $C_{d,\alpha} $ is the normalization constant.  Note that when $\alpha=2$ the operator coincides with the usual negative Laplacian $-\Delta$. The non-local and singular nature of IFL poses major difficulties in discretization and analysis for the deterministic numerical methods (cf. \cite{Acosta2017,Du2012,Huang2014,Wu2020} and the references therein). 
On the contrary, stochastic methods have demonstrated remarkable success in numerical solving this problem, especially in the case of high-dimensions and complex domains. 
Kyprianou et al. \cite{Kyprianou2018} proposed an unbiased walk-on-sphere method, based on the Feynman-Kac formula, for solving the fractional Poisson equation in two dimensions. Jiao et al. \cite{Jiao2023} proposed a modified version of the walk-on-sphere method for solving fractional Poisson equations in high-dimensional balls. Sheng et al. \cite{Sheng2023} recently developed an efficient Monte Carlo method for solving fractional PDEs on irregular domains in high dimensions, and also conducted a rigorous error analysis for the proposed scheme. 

As far as we know, the following questions are still unsolved in the development of
spectral Monte Carlo methods for the linear PDEs: (i).\,extend spectral Monte Carlo method for the Poisson equation involving IFL; (ii).\,extend space-time spectral Monte Carlo method for the parabolic equation involving IFL. However, the extension of the spectral Monte Carlo methods to the Poisson and parabolic equations driven by the $\alpha$-stable Lévy process is still challenging. The main difficulty in developing the spectral Monte Carlo method for PDEs involving IFL lies in precisely reconstructing the numerical solution and its fractional derivatives from the inaccurate data in each iteration step, that is, the stochastic solution obtained by the WOS methods.

\begin{figure}[htbp]
\centering  
\subfigure 
{ \includegraphics[width=8.8cm,height=5.6cm]{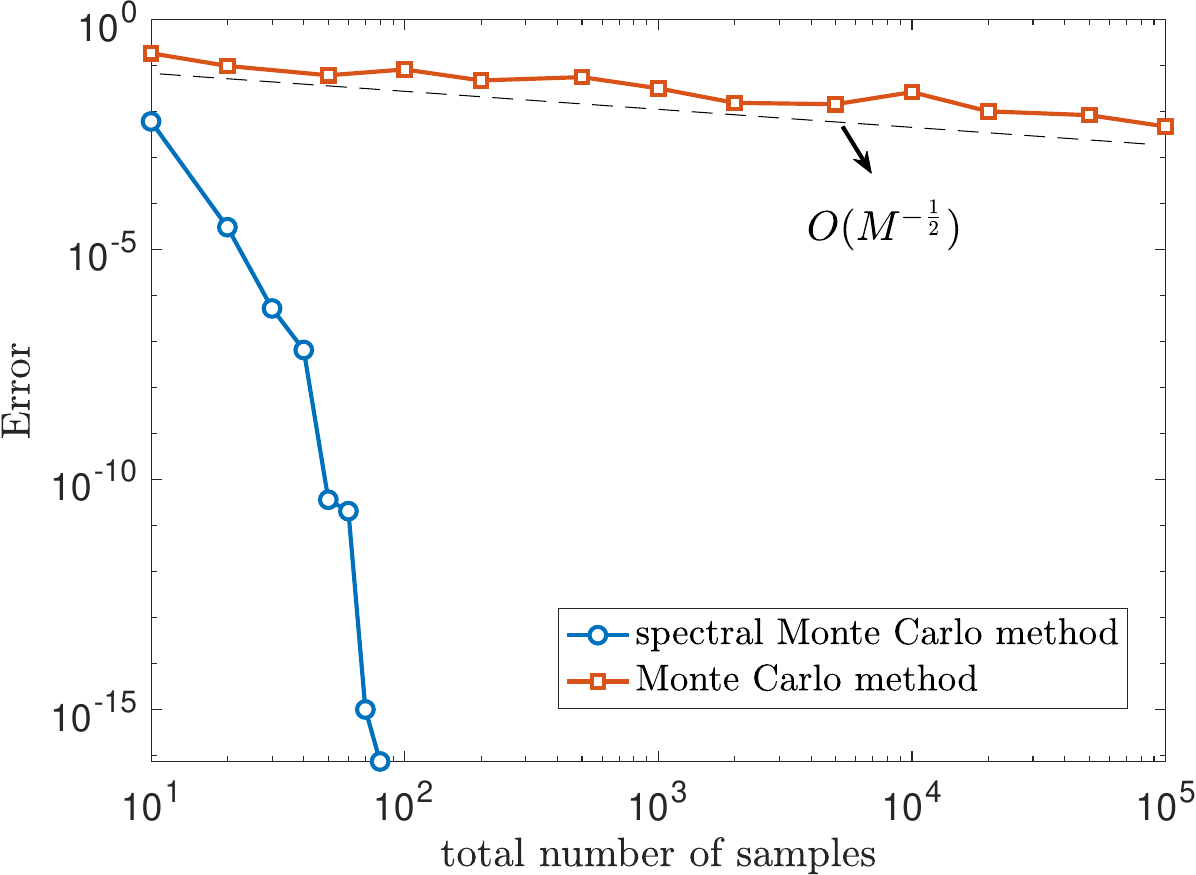}}
\caption{A comparison of the convergence rates between the Monte Carlo method and the spectral Monte Carlo method for solving the fractional Poisson equation $(-\Delta)^{\frac{\alpha}{2}}u(x)=f(x) \;{\rm in}\,\Omega$; $u(x)=0\;{\rm on}\,\Omega^c.$ Here, we take the exact solution $u(x)=(1-x^2)^{\frac{\alpha}{2}}(x^2+x+1)$, with $\alpha=0.4$ (cf. Example \ref{Ex:1}). It is observed that SMC rapidly achieves machine-level precision, whereas MC struggles to reach high accuracy.}\label{fig_MCvsSMC}
\end{figure}

The main purpose of this paper is to construct a new spectral Monte Carlo method for solving the Poisson equation driven by $\alpha$-stable  L$\acute{\text{e}}$vy process, to prove rigorously its error estimates, and to extend to the space-time spectral Monte Carlo method for parabolic equation driven by $\alpha$-stable  L$\acute{\text{e}}$vy process as well as its error estimate. Fig \ref{fig_MCvsSMC} illustrates a comparison of the convergence rates between the Monte Carlo method and the spectral Monte Carlo method (cf. Example \ref{Ex:1} in section \ref{sect5} for more detail).
Our main contributions include the following:
\begin{itemize}
\item The proposed method not only maintains spectral accuracy, as with traditional spectral methods, but also brings a delightful surprise: the value of the numerical solution at each mesh point can be obtained independently at each iteration process. Thus, it obviates the requirement for solving linear systems, thereby simplifying and accelerating the computation via parallel computation. This advantage becomes particularly prominent in space-time methods, as it enables parallel computation for each individual spatiotemporal node.

\item For the Poisson equation driven by $\alpha$-stable  L$\acute{\text{e}}$vy process, I have extended the algorithm from the case $\alpha=2$ (cf. \cite{Gobet2005}) to a more general model $\alpha\in(0,2)$ by utilizing the interpolation based on generalized Jacobi basis functions to accurately reconstruct the numerical solution at any points. 

\item As far as we know, we explore, for the first time, the Monte Carlo methods with spectral accuracy in both spatial and temporal for solving the parabolic equations driven by $\alpha$-stable  L$\acute{\text{e}}$vy process with $\alpha\in(0,2]$, i.e., thereby including the classical and fractional parabolic equations. The existing works (cf.\cite{Gobet2005}), in contrast, only focused on addressing Poisson equations or employing space-time methods based on piecewise linear interpolation in time for solving parabolic equations with $\alpha=2$.
\end{itemize}

The rest of the paper is organized as follows. In Section \ref{sect2}, we develop the spectral Monte Carlo method for solving the Poisson equation driven by $\alpha$-stable  L$\acute{\text{e}}$vy process. The corresponding bias 
of the proposed method are presented in Section \ref{sect3}.  
The extension to space-time spectral Monte Carlo method for parabolic equation as well as its bias is presented in Section \ref{sect4}. Finally, in Section \ref{sect5}, we present
several numerical examples to illustrate the high accuracy and efficiency of the proposed spectral Monte Carlo method for the PDEs in both fractional and classical cases.

\section{Spectral Monte Carlo method for Poisson-type equation in 1D} \label{sect2}
\setcounter{lem}{0} \setcounter{thm}{0}  \setcounter{rem}{0}
In this section, we propose an exponentially accurate Monte Carlo algorithm for the approximation of the Poisson equation in both classical and fractional cases, which draws inspiration from the spectral Monte Carlo method (cf. \cite{Gobet2004,Gobet2005}) for classical Poisson equations. 
Before that, we introduce the Monte Carlo method and review some relevant properties of generalized Jacobi polynomials (GJFs), which will be an indispensable building block for the construction of our novel spectral Monte Carlo method. 

\subsection{WOS method}
We consider the following Poisson equation involving IFL with the Dirichlet boundary condition and $\alpha\in(0,2)$:
\begin{equation}\label{uf}
\begin{cases}
(-\Delta)^{\frac{\alpha}2}u(x)=f(x),\;&x \in \ \Omega,\\[4pt]
u(x)=g(x), &x\in \ \Omega^c,
\end{cases}
\end{equation}
where  $\Omega$ is an open bounded domain and  $f: \Omega\to \mathbb R$ is a given function in a suitable space. 
In what follows, we will introduce the WOS method for the solutions of Poisson equation \eqref{uf} in \cite{Sheng2023}.  To this end, we first recall the Feynman-Kac formula that the solution of \eqref {uf} associated with $\alpha$-stable L$\acute{\text{e}}$vy process $X^{\alpha}_{t} $ and can be rewritten in the form of path integral (cf. \cite{Kyprianou2018})
\begin{equation} \label{fkx}
u(\bx) = \mathbb{E}_{X_{0}^{\alpha }=\bx} \Big[g(X_{\tau_{\Omega}^{\bx}}^{\alpha})+
\int_{0}^{\tau_{\Omega}^{\bx}}f(X^{\alpha}_{t})\,{\rm d}t\Big] :=\mathbb{E}_{\bx}\big[\Phi_{\bx}(f,g,X^{\alpha}_{t})\big],\;\; \bx\in\Omega,
\end{equation}
where  $\tau_{\Omega}^{\bx}$ is the first exit to domain $\Omega$ and $\mathbb{E}_{\bx}$ is an abbreviation for $\mathbb{E}_{X_{0}^{\alpha }=\bx} $.
Instead of simulating the irregular trajectories of symmetric $\alpha$-stable  L$\acute{\text{e}}$vy processes, the Monte Carlo method  takes advantage of the explicit expressions of $P_{r}$ as the probability density functions, which gives the transition probability of the symmetric $\alpha$-stable process trajectory inside the ball during its passage from the center to the boundary $\partial \Omega$ or outside the domain $\Omega$ (cf. Fig.\,\ref{ballregion}). 
Since the Markov process will leave the given domain at a finite time, then we denote the stopping step for the random walk $ X^\alpha_{m}$ by $L = \text{inf} \{ m : X^\alpha_{m}\notin \Omega \}.$

\begin{figure}[!ht]
\centering  
\subfigure
{
\includegraphics[width=5.3cm,height=4.3cm]{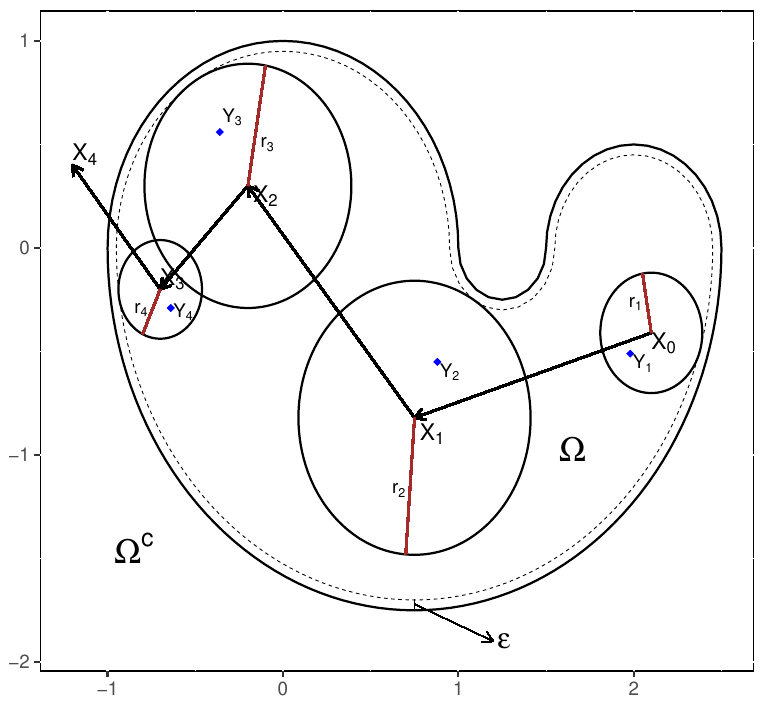}}
\caption{The path of walk on $2$-D irregular domain}\label{ballregion}
\end{figure}

According to \cite[Corollary 2.1]{Sheng2023}, we can employ a walk-on-spheres based Monte Carlo method to approximate \eqref{fkx}. 
More precisely,  let $r > 0$ and $\alpha\in(0,2]$, and assume that $f\in L^{1}(\Omega)\cap C(\overline{\Omega})$, $g\in L^{1}_{\alpha}(\Omega^{c})\cap C(\Omega^{c}) $, then the numerical solution $u_\ast(\bx)$ for \eqref{uf} can be evaluated by
\begin{equation}\label{iresolu}
\begin{split}
&u_\ast(\bx)= \frac{1}{M}\sum_{i=1}^M \Big[g(X_{L}^{i})+\sum_{\ell=0}^{L-1}\zeta(X_\ell^i)\mathbb{E}^\ast_{\widetilde{\mathbb{Q}}}[f(Y_{\ell+1}^i)]\Big]:=
 \frac{1}{M}\sum_{i=1}^M \Big[ \tphi_{\bx}(f,g,X_{t}^{i})\Big],
\end{split}
\end{equation}
where $X_\ell^i:=X^{\alpha,i}_\ell,$ $0\leq \ell\leq L$, $X_0=\bx\in \Omega$, and 
\begin{equation}\label{zetaE}
\zeta(\bx) = \int_{\B_{r}^n}Q
(\bx,\by)\,\d \by,\;\;\;\mathbb{E}^\ast_{\widetilde{\mathbb{Q}}}[f(Y_{\ell+1}^i)] = \frac{1}{M_1}\sum_{j=1}^{M_1}f(Y_{\ell+1}^{i,j}).
\end{equation} 
In the above, $\widetilde{Q}(\bm{x},\bm{y})= \frac{Q(\bm{x},\bm{y})}{\int_{\mathbb{B}_r^n}Q(\bm{x},\bm{y})\,{\rm d}\bm{y}}$, and $Q(\bm{x},\bm{y})$ denote the Green's function for $\bm{x}\neq \bm{y}$ is given by
\begin{equation}
Q(\bm{x},\bm{y})=
\begin{cases}
\hat{C}_n^\alpha \big|\bm{y}-\bm{x}\big|^{\alpha-n} \int_{0}^{\rho(\bm{x},\bm{y})} \frac{t^{\frac{\alpha}{2}-1}}{(t+1)^2}\,{\rm d}t,\;\;\;&\alpha \neq n,\\[6pt]
\hat{C}_1^{\frac{1}{2}}\log\big(\frac{r^2-\bm{x}\bm{y}+\sqrt{(r^2-\bm{x}^2)(r^2-\bm{y}^2)}}{r|\bm{y}-\bm{x}|}\big),\;\;\;&\alpha = n,
\end{cases}
\end{equation}
with 
\begin{equation*}
\rho(\bm{x},\bm{y}) = \frac{(r^2-|\bm{x}|^2)(r^2-|\bm{y}|^2)}{r^2|\bm{y}-\bm{x}|^2},\;\;\; \hat{C}_n^\alpha = \frac{\Gamma(n/2)}{2^\alpha\pi^{\frac{n}{2}}\Gamma^2(\alpha/2)}.
\end{equation*}

Now it remains to simulate the trajectory of $\{X^i_\ell\}_{\ell=1}^{L}$. To accomplish this, it is sufficient to determine the position of the next random point, denoted as $X^i_{\ell+1}$, based on the current location $X^i_{\ell}$.
Indeed, as long as we can determine the position of the current random point $X^i_{\ell+1}$ and its distance to the boundary $r_\ell$, we can utilize the Poisson kernel to determine the jump distance $J_\ell$, which is given by 
 \begin{eqnarray}
 \label{radius}
J_\ell:= J_\ell(\omega;r_\ell,\alpha)=  \frac{r_\ell}{\sqrt{B(1-\frac{\alpha}{2},\frac{\alpha}{2})- B^{-1}(\frac{\pi\,\omega}{\sin(\pi\alpha/2)};1-\frac{\alpha}{2},\frac{\alpha}{2})}},\;\;\;\omega\in(0,1),
 \end{eqnarray}
 where $B^{-1}(\cdot\,;a,b)$ denote the inverse function of  incomplete Beta function $B(\cdot\,;a,b)$.  
 With the jump distance $J_\ell$, we can obtain $X^i_{\ell+1}$ by multiplying it with the uniformly distributed spherical direction, that is,
 \begin{equation}
\label{XiforPoisson}
X^i_{\ell+1} = X^i_{\ell} + J_\ell\cdot
\begin{bmatrix}
\cos\theta_{1}\\
 \sin\theta_{1}\cos\theta_{2}\\\
 \cdots\cdots\\
\sin\theta_{1}\cdots\sin\theta_{d-2}\sin\theta_{d-1}
\end{bmatrix}.
\end{equation}
Note that we can generate another random variable $Y_{\ell+1}^i$ for inside the ball $\B^{n}_{r_{\ell}}$ centered at $X_\ell^i$ associated with the density function $\Q(\bx,\by)$. The method outlined above is applicable to the one-dimensional scenario as well, with the spherical directions replaced by left and right directions. However, for the sake of brevity and clarity, we will omit these straightforward details in our discussion.

\subsection{Generalized Jacobi functions and spectral approximation}
Let $\Omega = \Lambda:=(-1,1),$ the classical Jacobi polynomials are mutually orthogonal with respect to the weight function $\omega^{(\alpha,\beta)}(x)=(1-x)^\alpha(1+x)^\beta$ over the interval $\Omega:=(-1,1)$, i.e.
\begin{equation}\label{orthjacobi}
\int_{-1}^1 P_n^{(\alpha,\beta)}(x)\,
P_m^{(\alpha,\beta)}(x)\,
\omega^{(\alpha,\beta)}(x)\,\d x =\gamma^{(\alpha,\beta)}_n \delta_{nm},
\end{equation}
where $\delta_{nm}$ is the Kronecker delta and
\begin{equation}
\gamma^{(\alpha,\beta)}_n= \frac{2^{\alpha+\beta+1}\Gamma(n+\alpha+1)\Gamma(n+\beta+1)}{(2n+\alpha+\beta+1)n!\Gamma(n+\alpha+\beta+1)}.
\nonumber
\end{equation}

For $\alpha,\beta>-1$, let $\{x_j^{(\alpha,\beta)}, \omega_{ j}^{(\alpha,\beta)}\}_{j=0}^{N_x}$ be the set of Jacobi-Gauss (JG) quadrature nodes and weights, which enjoys the exactness
\begin{equation}
\int_{-1}^1 \phi(x) \omega^{(\alpha,\beta)}(x) d x=\sum_{j=0}^{N_x} \phi\big(x_j^{(\alpha,\beta)}\big) \omega^{(\alpha,\beta)}_j, \quad \forall \phi \in \mathcal{P}_{2 {N_x}+1},
\nonumber
\end{equation}
where $\mathcal{P}_N$ is the set of all polynomials of degree at most $N$. Let $I^{(\alpha,\beta)}_{N_x} u$ be the Jacobi-Gauss Lagrange polynomial interpolation of $u\in C(\Lambda)$ defined by
\begin{equation}\label{ujex}
I^{(\alpha,\beta)}_{N_x}u(x)=\sum^{N_x}_{j=0}u(x^{(\alpha,\beta)}_j)h_j(x) \in \mathcal{P}_{2 {N_x}+1},
\end{equation}
where the interpolating basis polynomials $\{h_j\}_{j=0}^{N_x}$ can be expressed by
\begin{equation*}
h_j(x)=\sum_{n=0}^{N_x} a_{n j} P_n^{(\alpha,\beta)}(x), \;\;\text{with}\;\,a_{n j}:=\frac{\omega_j}{\gamma_n^{(\alpha,\beta)}} P_n^{(\alpha,\beta)}(x_j). 
\end{equation*}

We infer from \cite[(7.12)]{Grubb2015} that the solution of \eqref{uf} is singular near the boundary $\partial\Omega$ which behaves like ${\rm dist}(x,\partial\Omega)^{\frac{\alpha}2}v(x)$, where ${\rm dist}(x,\partial\Omega)$ denotes the distance from $x\in\Omega$ to $\partial\Omega$ and $v$ is a smooth function.
To match the singularity, we resort to the generalized Jacobi functions (cf. \cite{Chen2016,Chen2018}) with index $(\frac{\alpha}{2},\frac{\alpha}{2})$ as the basis function, that is,
\begin{equation}\label{gJf11}
\Ja_n(x)=(1-x^2)^{\frac{\alpha}{2}}P_n^{(\frac{\alpha}{2},\frac{\alpha}{2})}(x)=
\oa(x)\, P_n^{(\frac{\alpha}{2},\frac{\alpha}{2})}(x),
\quad x\in\Lambda,
\end{equation}
which together with \eqref{orthjacobi} leads to
\begin{equation}\label{orthGJF}
\int_{-1}^1 \Ja_n(x)\Ja_m(x)\,
\oaa(x)\,\d x =\gamma^{(\frac{\alpha}{2},\frac{\alpha}{2})}_n \delta_{nm}.
\end{equation}
In the above, the notations $\omega^{\frac{\alpha}{2}}(x)$ and $\omega^{-\frac{\alpha}{2}}(x)$ are short for $\omega^{(\frac{\alpha}{2},\frac{\alpha}{2})}(x)$ and $\omega^{(-\frac{\alpha}{2},-\frac{\alpha}{2})}(x)$, respectively.
The GJFs apparently cover the case of polynomials $ \mathcal{J}_n^{1}(x)=(1-x^2)P^{(1,1)}_n(x)=\frac{2(n+1)}{2n+3}(L_n(x)-L_{n+2}(x))$ as $\alpha\rightarrow2$. 
Throughout this paper, we set $\alpha\in (0,2]$, and then we claim that our method is unified for classical and fractional PDEs.
We also define the finite-dimensional fractional-polynomial space:
\begin{equation*}
\fna=\big\{(1-x^2)^{\frac{\alpha}{2}}\psi:\psi\in \mathcal{P}_{N_x}\big\}={\rm span}\big\{\Ja_n(x):0\leq n\leq N_x\big\}.
\end{equation*}

To simplify the notation, we write $x_j:=x_j^{(\frac{\alpha}{2},\frac{\alpha}{2})}$ and $\omega_j:=\omega_j^{(\frac{\alpha}{2},\frac{\alpha}{2})}$ the Jacobi-Gauss points and weights, respectively.  We then define the following generalized Lagrange interpolation
\begin{equation}\label{unx}
\ina u(x)=\sum^{N_x}_{j=0}u(x_j)\lja\in\fna,
\end{equation}
where the generalized Lagrange interpolating basis is defined by
\begin{equation}\label{fracLagbasxx}
\lja:=\Big(\frac{1-x^2}{1-x_j^2}\Big)^{\frac{\alpha}2}h_j(x)=\Big(\frac{1-x^2}{1-x_j^2}\Big)^{\frac{\alpha}2}\prod_{\substack{0 \leq m \leq {N_x} \\ m \neq j}} \frac{x-x_m}{x_j-x_m}.
\end{equation}
One verifies readily that $l^{\frac{\alpha}{2}}_j(x_i)=\delta_{ij}$ for $0\leq i,j\leq N.$
Clearly, $\lja$ is a non-polynomials basis, we find their relation with generalized Jacobi functions, that is,
\begin{equation}\label{interpolationex}
\lja=\sum_{n=0}^{N_x} c_{nj}\Ja_n(x),\;\;\text{where}\;\,c_{nj}=\frac{1}{\gamma_n^{(\frac{\alpha}{2},\frac{\alpha}{2})}} (1-x_j^2)^{-\frac{\alpha}2}  P_n^{(\frac{\alpha}{2},\frac{\alpha}{2})}(x_j)\omega_j.
\end{equation}

According to \cite[Theorem 2]{Mao2016}, we have
\begin{equation} \label{derpol}
(-\Delta)^{\frac{\alpha}{2}} \Ja_n(x)= \frac{\Gamma(n+\alpha+1)}{n!} P^{(\frac{\alpha}{2},\frac{\alpha}{2})}_n(x),
\end{equation}
which together with \eqref{unx} and \eqref{interpolationex} leads to
\begin{equation}\label{fracujex}
\begin{split}
(-\Delta)^{\frac{\alpha}{2}}  \ina u(x)&=\sum_{j=0}^{N_x} u(x_j)\Big(\sum_{n=0}^{N_x}c_{nj}(-\Delta)^{\frac{\alpha}{2}} \Ja_n(x)\Big)\\&=\sum_{j=0}^{N_x} u(x_j)\Big(\sum_{n=0}^{N_x} c_{nj} \frac{\Gamma(n+\alpha+1)}{n!} P_n^{(\frac{\alpha}{2},\frac{\alpha}{2})}(x)\Big)
\\&=\sum_{n=0}^{N_x} \tilde{u}_n P_n^{(\frac{\alpha}{2},\frac{\alpha}{2})}(x),
\;\;\text{with}\;\,\tilde{u}_n=\sum_{j=0}^{N_x} u(x_j)c_{nj}\frac{\Gamma(n+\alpha+1)}{n!}.
\end{split}
\end{equation}

\subsection{Implementation of spectral Monte Carlo iteration algorithm (SMC)}\label{algorm}
In this section, we restrict our attention to the following fractional Poisson equation with $\alpha\in(0,2]$ and homogeneous Dirichlet conditions in one dimension:
\begin{equation}\label{uf0}
\begin{cases}
(-\Delta)^{\frac{\alpha}2}u(x)=f(x),\;&x\in \ \Omega,\\[4pt]
u(x)= 0, &x\in \ \Omega^c .
\end{cases}
\end{equation}
where the limiting case  $\alpha\rightarrow2$ becomes the classical Poisson equation reads
\begin{equation}\label{uf2} 
\begin{cases}
-\Delta u(x)=f(x),\;&x \in \ \Omega,\\[4pt]
u(x)= 0, &x \in \ \partial\Omega .
\end{cases}
\end{equation}
Note that for the Poisson equation with nonhomogeneous Dirichlet boundary conditions \eqref{uf0}, we set $u(x)$ to define on $\mathbb R$ and denote its restriction on the finite domain $\Omega$ by $u_\Omega(x)=u(x)|_{x\in \Omega}.$ Given $g(x)$ defined on $\Omega^c,$ we look for
\begin{equation*}\label{uBC}
u(x)=\begin{cases}
u_\Omega(x),\quad & x\in \Omega,\\
g( x), &  x\in {\Omega}^c,
\end{cases}
\end{equation*}
 i.e., the unknown $u_\Omega(x)$, such that $(-\Delta)^{\frac{\alpha}2}u(x)=f( x)$ in $\Omega.$ It is clear that we can write the solution as
$
u(x)= \tilde u_\Omega(x)+ \tilde g(x), \;x\in {\mathbb R},
$
where $\tilde u_\Omega$ (resp. $\tilde g$) is the zero extension of  $u_\Omega$ (resp. $g$)  on $\Omega$ (resp. $\Omega^c$) to $\mathbb R.$
 Then the  problem of interest becomes $(-\Delta)^{\frac{\alpha}2}\tilde u_\Omega(x)=f(x)- (-\Delta)^{\frac{\alpha}2}\tilde g(x).$ While for $\alpha=2$, for the nonhomogeneous boundary conditions, we can direct split $u(x)= \tilde u(x)+ \tilde g(x),$ with $\tilde u|_{\partial\Omega}=0$, then solve $\tilde u$ by \eqref{uf2}.

The traditional Monte Carlo method often faces limitations in achieving high accuracy. To address this, various variance reduction techniques combined with spectral methods have been developed \cite{Gobet2004,Gobet2005}. More precisely, we obtain an initial value using the traditional Monte Carlo method and construct a residual equation based on this initial value. By iteratively solving the residual equation using Monte Carlo simulations and updating the initial value, we can progressively improve the accuracy of our computations. 
Now, we describe the numerical implementation of the highly accurate stochastic iteration algorithm for \eqref{uf0}. To summarize, the detailed implementation involves the following steps:

\smallskip
\underline{\bf Step 1:}\, Using the walk-on-spheres method to compute the initial iteration solution of \eqref{uf0} denoted by $u^{(1)}_\ast(x)$ at Jacobi-Gauss points $\{x_j:=x^{(\frac{\alpha}2,\frac{\alpha}2)}_j\}_{j=0}^{N_x}$:
\begin{equation}
u^{(1)}_\ast(x_j) =  \frac{1}{M}\sum_{i=1}^M\Big[\sum_{\ell=0}^{L- 1} \zeta(X_{j,\ell}^i) \mathbb{E}^\ast_{\Q}\big[f(Y_{j,\ell+1}^i)\big]\Big],\;\;\;0\leq j\leq N_x,
\end{equation}
where $\mathbb{E}^\ast_{\widetilde{Q}}$ is defined in \eqref{zetaE}, $M$ denotes the number of paths and the initial position of random process $X^{i}_{j,0}=x_j$, with $1\leq i\leq M$.

\smallskip
\underline{\bf Step 2:}\, Compute the fractional Laplacian of $u^{(1)}_\ast(x)$ from initial data $\{u^{(1)}_\ast(x_j)\}_{j=0}^{N_x}$, i.e., $ (-\Delta)^{\frac{\alpha}2}u^{(1)}_\ast(x)$. 
More precisely, we employ the generalized Lagrange interpolation \eqref{unx} and write $u^{(1)}_\ast(x)=\sum^{N_x}_{j=0}u^{(1)}_\ast(x_j)l^{(\frac{\alpha}{2},\frac{\alpha}{2})}_j(x),$ then by \eqref{fracujex} the exact fractional Laplacian of $u^{(1)}_\ast$ reads
\begin{equation*} 
\begin{split}
(-\Delta)^{\frac{\alpha}{2}} u^{(1)}_\ast(x) =\sum_{n=0}^{N_x} \tilde{u}_n P_n^{(\frac{\alpha}{2},\frac{\alpha}{2})}(x),\;\;\text{where}\;\,\tilde{u}_n=\sum_{j=0}^{N_x} u^{(1)}_\ast(x_j)c_{nj}\frac{\Gamma(n+\alpha+1)}{n!}.
\end{split}
\end{equation*}

\vspace{-6pt}
\underline{\bf Step 3:}\, Using the walk-on-spheres method to compute the residual function $\varepsilon=u-u^{(1)}_\ast$ from the following 
\begin{equation}\label{cvm}
\begin{cases}
(-\Delta)^{\frac{\alpha}2}\varepsilon= f^{(1)}, & \text { in } \Omega,\\
\varepsilon = 0, & \text { on } \Omega^c,
\end{cases}
\end{equation}
where the updated right hand side source function $f^{(1)} = f- (-\Delta)^{\frac{\alpha}2} u^{(1)}_\ast.$  Similarly, through the Feynman-Kac formula and walk-on-spheres method, we have the numerical solution $\varepsilon^{(1)}_\ast$ of \eqref{cvm}
\begin{equation*}
\varepsilon^{(1)}_\ast(x_j) = \frac{1}{M}\sum_{i=1}^M \Big[\sum_{\ell=0}^{L- 1} \zeta (X^i_{j,\ell}) \mathbb{E}^\ast_{\widetilde{Q}}\big[f^{(1)}(Y^i_{j,\ell+1})\big]\Big],\;\;\;0\leq j\leq N_x.  
\end{equation*}

\underline{\bf Step 4:}\, Update the solution by $u^{(2)}_\ast(x)=u^{(1)}_\ast(x)+\varepsilon^{(1)}_\ast(x).$ Clearly, we can expect that the updated solution $u^{(2)}_\ast$ is more accurate than the previous stage $u^{(1)}_\ast$.

In actual computation,  we repeat Steps 2-4 until the maximum absolute difference between the two
solutions $u^{(k-1)}_\ast$ and $u^{(k)}_\ast$ is less than the desired tolerance or the machine accuracy. 

\section{The error estimates for SMC}\label{sect3} 
\setcounter{lem}{0} \setcounter{thm}{0}  \setcounter{rem}{0} 	
In this section, we shall analyze and characterize the convergence of the iteration Monte Carlo algorithm proposed in section \ref{algorm}, where we aim to estimate the error bounds in the sense of weak convergence 
\begin{equation}
E^\infty_k:=\max_{1\leq i \leq N_x}|\mathbb{E}(u^{(k)}_\ast(x_i)-u(x_i))|,
\end{equation}
where $\{x_i\}_{i=0}^{N_x}$ denote the Jacobi-Gauss points.

Then, we can derive the following error estimate for interpolation operator in \eqref{unx} under $L^\infty$-norm, which will be used later.

\begin{lemma}\label{lem3.1}
Let $\alpha\in (0,2]$ and denote $\tilde u(x):=\oaa(x)u(x)$. Suppose that $\tilde{u}^{(r-1)}$ is absolutely continuous on $\Lambda$ for some $r\geq 1$ and its $r$-th derivative $\partial_x^r\tilde{u}$ is of bounded variation ${\rm Var}[\partial_x^r\tilde{u}]<\infty$, then we have
\begin{equation}\label{estimate_interpolation}
\big\| \ina u -  u \big\|_{L^{\infty}}
\leq c N_x^{\frac{\alpha}{2}-\frac12-r},
\end{equation}
where $c$ is a positive constant independent of $r$ and $N_x$.
\end{lemma}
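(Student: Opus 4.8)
The plan is to convert the weighted generalized-Jacobi interpolation into ordinary Jacobi--Gauss interpolation of the \emph{unweighted} profile $\tilde u=\oaa u$. First I would record the algebraic identity behind \eqref{unx}--\eqref{fracLagbasxx}: since $u(x_j)=\oa(x_j)\tilde u(x_j)$ and, by \eqref{fracLagbasxx}, $\lja=\big(\oa(x)/\oa(x_j)\big)h_j(x)$, the factors $\oa(x_j)$ cancel and
\begin{equation*}
\ina u(x)=\oa(x)\sum_{j=0}^{N_x}\tilde u(x_j)h_j(x)=\oa(x)\,I^{(\frac\alpha2,\frac\alpha2)}_{N_x}\tilde u(x),
\end{equation*}
where $I^{(\frac\alpha2,\frac\alpha2)}_{N_x}$ is the classical Jacobi--Gauss interpolant \eqref{ujex}. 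Together with $u=\oa\tilde u$ this gives the factorization $\ina u-u=\oa\big(I^{(\frac\alpha2,\frac\alpha2)}_{N_x}\tilde u-\tilde u\big)$, so the estimate reduces to controlling the \emph{weighted} Jacobi--Gauss interpolation error of $\tilde u$. The weight $\oa(x)=(1-x^2)^{\frac\alpha2}$ is crucial: it is bounded by $1$, but more importantly it vanishes at $x=\pm1$, and this endpoint suppression is what converts the endpoint growth of the Jacobi polynomials into the gain reflected by the factor $\frac\alpha2-\frac12$ in the exponent.

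Next I would expand $\tilde u=\sum_n\hat u_n P^{(\frac\alpha2,\frac\alpha2)}_n$ and split the error into a truncation part and an aliasing part, using $I^{(\frac\alpha2,\frac\alpha2)}_{N_x}P_{N_x}\tilde u=P_{N_x}\tilde u$:
\begin{equation*}
I^{(\frac\alpha2,\frac\alpha2)}_{N_x}\tilde u-\tilde u=\big(I^{(\frac\alpha2,\frac\alpha2)}_{N_x}-\mathrm{id}\big)\big(\tilde u-P_{N_x}\tilde u\big),\qquad \tilde u-P_{N_x}\tilde u=\sum_{n>N_x}\hat u_n P^{(\frac\alpha2,\frac\alpha2)}_n.
\end{equation*}
Multiplying by $\oa$ turns each basis term into a generalized Jacobi function, $\oa(x)P^{(\frac\alpha2,\frac\alpha2)}_n(x)=\Ja_n(x)$ (cf.\ \eqref{gJf11}), so the truncation contribution becomes $\sum_{n>N_x}\hat u_n\Ja_n(x)$. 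Two ingredients then close the estimate. The first is coefficient decay: under the hypothesis that $\tilde u^{(r-1)}$ is absolutely continuous and $\partial_x^r\tilde u$ is of bounded variation, repeated integration by parts against the derivative/recurrence relations for $P^{(\frac\alpha2,\frac\alpha2)}_n$ (whose boundary terms carry the weight $\omega^{(\frac\alpha2,\frac\alpha2)}$) yields a bound $|\hat u_n|\le c\,\V[\partial_x^r\tilde u]\,n^{-\sigma_r}$. The second is a sharp pointwise bound on $\Ja_n=\oa P^{(\frac\alpha2,\frac\alpha2)}_n$: because the weight annihilates the $O(n^{\frac\alpha2})$ endpoint peak of $P^{(\frac\alpha2,\frac\alpha2)}_n$, the quantity $\|\Ja_n\|_{L^\infty}$ is controlled (up to the stated algebraic factor). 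Summing $\sum_{n>N_x}|\hat u_n|\,\|\Ja_n\|_{L^\infty}$ as an algebraic tail produces the rate $N_x^{\frac\alpha2-\frac12-r}$, and the same bookkeeping on the aliasing part—whose discrete coefficients are dominated by the tail $\{\hat u_n\}_{n>N_x}$ through the quadrature exactness—reproduces the same order.

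The hard part will be the sharp calibration of these two estimates to the exponent $\frac\alpha2-\frac12-r$: namely (i) extracting the precise decay power $\sigma_r$ from only the bounded-variation hypothesis and pairing it with the \emph{weighted} pointwise bound on $\Ja_n$ rather than the lossy endpoint bound $\|P^{(\frac\alpha2,\frac\alpha2)}_n\|_{L^\infty}\sim n^{\frac\alpha2}$ pulled out as $\|\oa\|_{L^\infty}\le1$, and (ii) dominating the aliasing error by the truncation tail uniformly. A secondary subtlety is the claim that $c$ is independent of $r$: the integration-by-parts constant and the pointwise/stability constants are universal, so the only $r$-dependence sits in $\V[\partial_x^r\tilde u]$, which is absorbed into $c$—asserting $r$-independence amounts to treating $\tilde u$ as fixed data. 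Alternatively, one may bypass the explicit summation by invoking a ready-made generalized Jacobi function approximation result in the spirit of \cite{Chen2016,Chen2018}, in which case the task reduces to matching their weighted norms to the unweighted $L^\infty$ norm via the factorization of the first paragraph.
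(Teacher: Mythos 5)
Your opening reduction is exactly the paper's: the identity $\ina u=\oa\, I^{(\frac\alpha2,\frac\alpha2)}_{N_x}(\oaa u)$, hence $\ina u-u=\oa\bigl(I^{(\frac\alpha2,\frac\alpha2)}_{N_x}\tilde u-\tilde u\bigr)$, is precisely \eqref{identityinterpolation} in the paper's proof. Where you part ways is in what happens next, and this is where your proposal has a genuine gap. The paper finishes in one line: it bounds $\|\oa\|_{L^\infty}\le c$ and invokes the known sup-norm error estimate \eqref{finftynorm} for classical Jacobi--Gauss interpolation (cited from \cite{Xiang2016}) applied to $\tilde u$ with $\gamma=\max\{\tfrac\alpha2,\tfrac\alpha2\}=\tfrac\alpha2$; that citation alone carries the entire rate $N_x^{\frac\alpha2-\frac12-r}$ under exactly the stated bounded-variation hypothesis. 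Your plan instead re-derives such an estimate from scratch (truncation plus aliasing, coefficient decay $|\hat u_n|\le c\,n^{-\sigma_r}$, weighted pointwise bounds on $\Ja_n$), but every quantitative ingredient is left open: $\sigma_r$ is never identified, the bound on $\|\Ja_n\|_{L^\infty}$ is only asserted (it is $O(n^{-\frac12})$ only for $\alpha\ge1$ and degrades to $O(n^{-\frac\alpha2})$ for $\alpha<1$, so the calibration is $\alpha$-dependent), and the claim that the aliasing part is dominated by the truncation tail is stated without argument. You flag this calibration yourself as ``the hard part,'' but it is not a technical remainder --- it is the entire content of the lemma; what you have is a program, not a proof, unless you import a result of Xiang's type at exactly this point.

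A second, more conceptual issue: your guiding premise --- that pulling out $\|\oa\|_{L^\infty}\le 1$ and using the unweighted bound on $I^{(\frac\alpha2,\frac\alpha2)}_{N_x}\tilde u$ is ``lossy'' because of the $O(n^{\frac\alpha2})$ endpoint growth of $P_n^{(\frac\alpha2,\frac\alpha2)}$ --- is mistaken. That crude step is exactly what the paper does, and it loses nothing, because the unweighted interpolation estimate \eqref{finftynorm} already encodes the parameter-dependent exponent $\gamma-\frac12-r$; the endpoint behavior of the Jacobi polynomials is accounted for inside that cited theorem, not by the vanishing of $\oa$. Relatedly, your fallback of invoking GJF approximation results ``in the spirit of \cite{Chen2016,Chen2018}'' would not close the gap either: those are projection/Galerkin estimates in weighted Sobolev norms, whereas what is needed here is a sup-norm bound for Lagrange interpolation at the Jacobi--Gauss points under a bounded-variation hypothesis, i.e., precisely \eqref{finftynorm}.
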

\begin{proof}
Recall the interpolation error $I^{(\alpha,\beta)}_{N_x}$ (cf. \cite{Xiang2016}): if $f(x)$ has an absolutely continuous $(r-1)$-st
derivative $\partial_x^{r-1}f(x)$ on $\Lambda$ for some $r\ge 1$ and its $r$-th derivative $\partial_x^{r}f(x)$ is of bounded variation 
${\rm Var}[\partial_x^{r}f]$, then
\begin{equation}\label{finftynorm}
\|f-I^{(\alpha,\beta)}_{N_x}f\|_{L^\infty}\leq c N^{\gamma-{\frac{1}{2}-r}}_x,\;\;\;\text{with}\;\gamma=\max\{\alpha,\beta\}.
\end{equation}
In view of \eqref{ujex} and \eqref{unx}, we can easily check that 
\begin{equation}\begin{split}\label{identityinterpolation}
\ina u &=\sum^{N_x}_{j=0}u(x_j)\lja=\sum^{N_x}_{j=0}u(x_j)\Big(\frac{1-x^2}{1-x_j^2}\Big)^{\frac{\alpha}2}h_j(x)
\\&=\oa(x)\sum^{N_x}_{j=0}\frac{u(x_j)}{(1-x_j^2)^{\frac{\alpha}{2}}}h_j(x)=\oa(x) \Big(I_{N_x}^{(\frac{\alpha}{2},\frac{\alpha}{2})}\big(\oaa(x)u(x)\big)\Big).
\end{split}\end{equation}
Then, we derive from \eqref{finftynorm} and \eqref{identityinterpolation} that
\begin{equation*}
\begin{split}
\big\| \ina u -  u \big\|_{L^{\infty}}
&=\big\|\oa (I_{N_x}^{(\frac{\alpha}{2},\frac{\alpha}{2})} (\oaa u) - (\oaa u)) \big\|_{L^{\infty}}
\\&\leq \|\oa \|_{L^\infty}\big\|I_{N_x}^{(\frac{\alpha}{2},\frac{\alpha}{2})} (\oaa u) - (\oaa u)\big\|_{L^{\infty}}
\\&\leq c\big\|I_{N_x}^{(\frac{\alpha}{2},\frac{\alpha}{2})} (\oaa u) - (\oaa u) \big\|_{L^{\infty}}\leq cN^{\frac{\alpha}{2}-\frac12-r}_x.
\end{split}
\end{equation*}
This ends the proof.
\end{proof}

To perform the error bound of $u(x)-u_\ast(x)$,  we first introduce an auxiliary function
\begin{equation}\label{phi_appr2}
\hphi_{\bx}(f,g,X_t) =  g(X_{L})+\sum_{\ell=0}^{L-1}\zeta(X_\ell)\mathbb{E}_{\widetilde{Q}}[f(Y_{\ell+1})],
\end{equation}
which replaces $\mathbb{E}^\ast_{\widetilde{\mathbb{Q}}}$ with $\mathbb{E}_{\widetilde{\mathbb{Q}}}$ in $\tphi_{x}(\cdot)$ (cf.\eqref{iresolu}). 
In view of \eqref{iresolu} and \eqref{phi_appr2}, we define the error function 
\begin{equation}\label{e_def}
e(f,M_1,x) =\tphi_{\bx}(f,g,X_t) - \widehat{\Phi}_{\bx}(f,g,X_t) = \sum_{\ell=0}^{L-1}\zeta(X_\ell)\Big(\mathbb{E}^\ast_{\widetilde{\mathbb{Q}}}[f(Y_{\ell+1})]-\mathbb{E}_{\widetilde{\mathbb{Q}}}[f(Y_{\ell+1})]\Big).
\end{equation}
By virtue of the SMC in Section \ref{algorm}, we can rewrite the $u^{(k)}_\ast(x)$ in terms of $u^{(k-1)}_\ast(x)$ as 
\begin{equation}\label{iteration_formula}
u^{(k)}_\ast(x_j) = u^{(k-1)}_\ast(x_j) + \frac{1}{M}\sum_{i=1}^{M}\Phi^{\ast}_{x_j}\big((-\Delta)^{\frac{\alpha}{2}}(u-u^{(k-1)}_\ast),(u-u^{(k-1)}_\ast)|_{\Omega^c},X_{j,t}^i\big),
\end{equation}
where the initial position of random process $X_{j,t}^i$ is $X^{i}_{j,0}=x_j$, with $1\leq i\leq M$ and $0\leq j\leq N_x.$
For simplicity, we define the following energy norm
\begin{equation*}
\|u\|_{H^\alpha(\Lambda)}:=\Big(\int_{\Lambda}\big((-\Delta)^{\frac{\alpha}{2}}u(x)\big)^2\,{\rm d}\by\Big)^{\frac{1}{2}}.
\end{equation*}
We now present the convergence result for the bias of the iterative algorithm. 
\setcounter{theorem}{0}
\begin{theorem}\label{thm3.3}
Let $\alpha \in (0,2)$, $u(x)$ be the solution of \eqref{uf} and $u^{(k)}_\ast(x)$ be the numerical solution of SMC described in section {\rm\ref{algorm}}, and denote $\tilde u(x):=\oaa(x)u(x)$. Suppose that $\partial_x^{r-1}\tilde{u}$ is absolutely continuous on $\Lambda$ for some $r\geq 1$ and its $r$-th derivative $\partial_x^r\tilde{u}$ is of bounded variation ${\rm Var}(\partial_x^r\tilde{u})<\infty$. Then, we have  
\begin{equation}\label{err_e_0^k} 
E_k^{\infty} \leq \rho E_{k-1}^{\infty} + cN_x^{\frac{\alpha}{2}-\frac12-r} +c M_1^{-\frac{1}{2}}\Vert u-\mathcal{I}_{N_x}^\frac{\alpha}{2}u\Vert_{H^\alpha },
\end{equation}
where the constant $\rho = cN_xM_1^{-\frac{1}{2}}\max_{0\leq j \leq N_x}\big\|(-\Delta)^{\frac{\alpha}{2}}l_j^\frac{\alpha}{2}\big\|_{L^2}$, and the constant $c$ is independent of parameters $k,N_x,M_1$. 
For any fixed $N_x$ and sufficiently large $M_1$, we have $\rho < 1$. Consequently, the sequence $\{E^\infty_k\}_k$ converges geometrically at rate $\rho$, up to a threshold equal to
\begin{equation}\label{err_limsup1} 
E_k^{\infty} \leq \frac{1}{1-\rho}\big[cN_x^{\frac{\alpha}{2}-\frac12-r} + c M_1^{-\frac{1}{2}}\Vert u-\mathcal{I}_{N_x}^\frac{\alpha}{2}u\Vert_{H^\alpha}\big].
\end{equation}
\end{theorem}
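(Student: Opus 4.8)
The plan is to convert the one-step relation \eqref{iteration_formula} into a recursive inequality for $E_k^\infty$ and then sum a geometric series. Subtracting $u(x_i)$ from \eqref{iteration_formula} and recalling that, for the homogeneous problem, $u^{(k-1)}_\ast\in\fna$ is supported on $\Lambda$ so the exterior data vanish, the residual problem \eqref{cvm} has the exactly computable source $f^{(k-1)}=f-\Da u^{(k-1)}_\ast=\Da(u-u^{(k-1)}_\ast)$. First I would split $\tphi=\hphi+e$ as in \eqref{e_def}, which separates the genuine walk-on-spheres contribution from the inner Monte Carlo error of size $M_1$, and condition on the $\sigma$-field $\mathcal F_{k-1}$ generated by all samples drawn through step $k-1$, so that $f^{(k-1)}$ is frozen and the step-$k$ walks and inner samples are independent of it.

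For the $\hphi$-term I would invoke the Feynman--Kac identity \eqref{fkx} together with the unbiasedness of the walk-on-spheres representation (\cite[Corollary 2.1]{Sheng2023}) to identify the conditional mean of the outer average with the exact residual $u-u^{(k-1)}_\ast$. Because the iteration can only store and reconstruct nodal data through $\ina$, the object actually reproduced is $\ina(u-u^{(k-1)}_\ast)=\ina u-u^{(k-1)}_\ast$, and the left-over discrepancy is precisely $u-\ina u$; Lemma \ref{lem3.1}, valid under the stated hypotheses on $\tilde u=\oaa u$, bounds it by $cN_x^{\frac{\alpha}{2}-\frac12-r}$, which is the second term of \eqref{err_e_0^k}.

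The decisive step is the $e$-term. Conditioning on a walk, $\mathbb E^\ast_{\Q}[f^{(k-1)}(Y_{\ell+1})]$ is a mean of $M_1$ i.i.d.\ draws, so each summand of \eqref{e_def} has conditional variance $M_1^{-1}\V_{\Q}[f^{(k-1)}(Y_{\ell+1})]$; bounding the density $\Q$ by a constant gives $\V_{\Q}[f^{(k-1)}(Y_{\ell+1})]\le c\|f^{(k-1)}\|_{L^2}^2=c\|u-u^{(k-1)}_\ast\|_{H^\alpha}^2$ by the definition of the energy norm and $f^{(k-1)}=\Da(u-u^{(k-1)}_\ast)$. Summing over the almost surely finite walk and taking a square root produces the factor $M_1^{-1/2}$. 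I would then split $u-u^{(k-1)}_\ast=(u-\ina u)+(\ina u-u^{(k-1)}_\ast)$ inside the energy norm: the first piece yields $cM_1^{-1/2}\|u-\ina u\|_{H^\alpha}$, while for the second I would use $\ina u-u^{(k-1)}_\ast=\sum_{j=0}^{N_x}\big(u(x_j)-u^{(k-1)}_\ast(x_j)\big)l_j^{\frac{\alpha}{2}}$ and the triangle inequality to get $\big\|\Da(\ina u-u^{(k-1)}_\ast)\big\|_{L^2}\le N_x\max_j|u(x_j)-u^{(k-1)}_\ast(x_j)|\max_j\big\|\Da l_j^{\frac{\alpha}{2}}\big\|_{L^2}$, whose nodal factor is $E_{k-1}^\infty$. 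This is exactly what manufactures the contraction constant $\rho=cN_xM_1^{-1/2}\max_{0\le j\le N_x}\big\|\Da l_j^{\frac{\alpha}{2}}\big\|_{L^2}$.

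Collecting the three contributions gives the one-step bound \eqref{err_e_0^k}. Since $\rho=\mathcal O(N_xM_1^{-1/2})$ for fixed $N_x$, taking $M_1$ large forces $\rho<1$; unrolling $E_k^\infty\le\rho E_{k-1}^\infty+C$ with $C=cN_x^{\frac{\alpha}{2}-\frac12-r}+cM_1^{-1/2}\|u-\ina u\|_{H^\alpha}$ gives $E_k^\infty\le\rho^{k-1}E_1^\infty+C\sum_{\ell=0}^{k-2}\rho^{\ell}\le\rho^{k-1}E_1^\infty+C(1-\rho)^{-1}$, which is the threshold \eqref{err_limsup1}. I expect the $e$-term to be the main obstacle: one must pass cleanly from the conditional variance of the inner estimator to the energy norm of $f^{(k-1)}$, and, crucially, isolate the nodal factor $\max_j|u(x_j)-u^{(k-1)}_\ast(x_j)|$ so that it can be identified with $E_{k-1}^\infty$ and absorbed into the contraction — this isolation, together with the explicit constant $\max_j\big\|\Da l_j^{\frac{\alpha}{2}}\big\|_{L^2}$ coming from the reconstructed Lagrange basis, is the origin of both the precise form of $\rho$ and the requirement that $M_1$ be sufficiently large.
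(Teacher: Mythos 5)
Your proposal is correct and follows essentially the same route as the paper's proof: the splitting $\tphi=\hphi+e$, conditioning on the $\sigma$-field of prior stages, the decomposition $u-u^{(k-1)}_\ast=(u-\ina u)+\ina(u-u^{(k-1)}_\ast)$ with Feynman--Kac unbiasedness handling the $\hphi$-part, the $M_1^{-1/2}$ variance bound for the inner estimator, and the Lagrange-basis expansion with $l_i^{\frac{\alpha}{2}}(x_j)=\delta_{ij}$ isolating the nodal errors that produce $\rho$ are exactly the paper's $B_1$/$B_2$ argument. Your explicit unrolling $E_k^\infty\le\rho^{k-1}E_1^\infty+C(1-\rho)^{-1}$ is only a cosmetic refinement of the paper's threshold statement \eqref{err_limsup1}.
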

\begin{proof}
 Clearly, \eqref{err_limsup1} is a direct consequence of \eqref{err_e_0^k}, so we will only prove \eqref{err_e_0^k}. 
Since $u^{(k)}_\ast(x)$ is obtained by interpolating at the Jacobi-Gauss points $\{x_j\}_{0\leq j \leq N_x}$, we have $\mathcal{I}_{N_x}^\frac{\alpha}{2}u^{(k-1)}_\ast(x)$ $=u^{(k-1)}_\ast(x)$, then we get that
\begin{equation}\label{decomposition}
\begin{aligned}
u(x)- u^{(k-1)}_\ast(x) &= \big(u(x)-\mathcal{I}_{N_x}^\frac{\alpha}{2}u(x)\big)+\big(\mathcal{I}_{N_x}^\frac{\alpha}{2}u(x) - u^{(k-1)}_\ast(x)\big)\\
&= \big(u(x)-\mathcal{I}_{N_x}^\frac{\alpha}{2}u(x)\big)+\mathcal{I}_{N_x}^\frac{\alpha}{2}\big(u(x) - u^{(k-1)}_\ast(x)\big).
\end{aligned}
\end{equation}
To distinguish between the simulations conducted before stage $k$ and those at stage $k$, we define the following conditional expectations
\begin{equation*}
\mathbb{E}^{(k-1)}[Y] = \mathbb{E}\big[Y|\sigma_1,\sigma_2,\cdots,\sigma_{k-1}\big],
\end{equation*}
where $\sigma_{l}$ represents the information for the $l$-th iteration, with $l=1,2,\cdots,k-1$. 
Recall the Law of total expectation, we have 
\begin{equation}\label{eyy}
\mathbb{E}[Y] = \mathbb{E}\big[\mathbb{E}^{(k-1)}[Y]\big],\;\;\; k\ge2.
\end{equation}
Thus, we first consider the conditional expectation $\mathbb{E}^{(k-1)}[\cdot]$ of the above identity.
Substituting it into right-hand side of \eqref{iteration_formula} and taking the conditional expectation on both sides leads to
\begin{equation}\label{u^{(k)}(x_i)}
\begin{split}
\mathbb{E}^{(k-1)}[u^{(k)}_\ast(x_j)]  &= \mathbb{E}^{(k-1)}[u^{(k-1)}_\ast(x_j)] + \mathbb{E}^{(k-1)}[B_1] + \mathbb{E}^{(k-1)}[B_2]
\\&=u^{(k-1)}_\ast(x_j) + \mathbb{E}^{(k-1)}[B_1] + \mathbb{E}^{(k-1)}[B_2],
\end{split}
\end{equation} 
where 
\begin{equation*}
 \begin{split}
&B_1 = \frac{1}{M}\sum_{i=1}^{M}\tphi_{x_j}\big((-\Delta)^{\frac{\alpha}{2}}(u-\mathcal{I}_{N_x}^\frac{\alpha}{2}u),(u-\mathcal{I}_{N_x}^\frac{\alpha}{2}u)|_{\Omega^c},X^i_{j,t}\big),\\&
B_2 = \frac{1}{M}\sum_{i=1}^{M}\tphi_{x_j}\big((-\Delta)^{\frac{\alpha}{2}}(\mathcal{I}_{N_x}^\frac{\alpha}{2}u-u^{(k-1)}_\ast),(\mathcal{I}_{N_x}^\frac{\alpha}{2}u-u^{(k-1)}_\ast)|_{\Omega^c},X^i_{j,t}\big).
\end{split}
\end{equation*}
We first compute $\mathbb{E}^{(k-1)}[B_1]$. Since $B_1$ is independent of the previous stages $\{u_\ast^{(j)}\}_{j=0}^{k-1}$, implying $\mathbb{E}^{(k-1)}[B_1] =\mathbb{E}[B_1]$, we then deduce from the above equations that
\begin{eqnarray*}
&&\mathbb{E}^{(k-1)}[B_1] =\mathbb{E}[B_1]=\mathbb{E}\Big[ \frac{1}{M}\sum_{i=1}^{M}\tphi_{x_j}\big((-\Delta)^{\frac{\alpha}{2}}(u-\mathcal{I}_{N_x}^\frac{\alpha}{2}u),(u-\mathcal{I}_{N_x}^\frac{\alpha}{2}u)|_{\Omega^c},X^i_{j,t}\big)\Big]
\\&&\hspace{49pt}= \frac{1}{M}\sum_{i=1}^{M}\mathbb{E}\Big[\tphi_{x_j}\big((-\Delta)^{\frac{\alpha}{2}}(u-\mathcal{I}_{N_x}^\frac{\alpha}{2}u),(u-\mathcal{I}_{N_x}^\frac{\alpha}{2}u)|_{\Omega^c},X^i_{j,t}\big)\Big]
\\&&\hspace{49pt}= \mathbb{E}\Big[\tphi_{x_j}\big((-\Delta)^{\frac{\alpha}{2}}(u-\mathcal{I}_{N_x}^\frac{\alpha}{2}u),(u-\mathcal{I}_{N_x}^\frac{\alpha}{2}u)|_{\Omega^c},X_{j,t}\big)\Big]
\\&&\hspace{49pt}=\mathbb{E}\Big[\hphi_{x_j}\big((-\Delta)^{\frac{\alpha}{2}}(u-\mathcal{I}_{N_x}^\frac{\alpha}{2}u),(u-\mathcal{I}_{N_x}^\frac{\alpha}{2}u)|_{\Omega^c},X_{j,t}\big)\Big]\\
&&\hspace{49pt}\quad+ \mathbb{E}\Big[(\tphi_{x_j}-\hphi_{x_j})\big((-\Delta)^{\frac{\alpha}{2}}(u-\mathcal{I}_{N_x}^\frac{\alpha}{2}u),(u-\mathcal{I}_{N_x}^\frac{\alpha}{2}u)|_{\Omega^c},X_{j,t}\big)\Big]\\
&&\hspace{49pt}=\mathbb{E}\Big[\hphi_{x_j}\big((-\Delta)^{\frac{\alpha}{2}}(u-\mathcal{I}_{N_x}^\frac{\alpha}{2}u),(u-\mathcal{I}_{N_x}^\frac{\alpha}{2}u)|_{\Omega^c},X_t\big)\Big]+ \mathbb{E}\Big[e\big((-\Delta)^{\frac{\alpha}{2}}(u-\mathcal{I}_{N_x}^\frac{\alpha}{2}u),M_1,x_j\big)\Big],
\end{eqnarray*}
which, together with the fact that $\mathbb{E}[\hphi_{x_j}(\cdot,\cdot,\cdot)]=\mathbb{E}[\Phi_{x_j}(\cdot,\cdot,\cdot)]$, implies 
\begin{equation}\label{EJ_1_1}
\mathbb{E}[B_1] = [u - \mathcal{I}_{N_x}^\frac{\alpha}{2}u](x_j) +\mathbb{E}\Big[e\big((-\Delta)^{\frac{\alpha}{2}}(u-\mathcal{I}_{N_x}^\frac{\alpha}{2}u),M_1,x_j\big)\Big].
\end{equation}
Then, we derive from  \eqref{e_def} and \eqref{u^{(k)}(x_i)} that
\begin{equation}\label{EB1}
\begin{split}
\Big|\mathbb{E}\Big[e\big((-\Delta)^{\frac{\alpha}{2}}(u-\mathcal{I}_{N_x}^\frac{\alpha}{2}u),M_1,x_j\big)\Big]\Big|& \leq c \mathbb{E}[L]\Big|(\E^\ast_{\Q}-\E_{\Q})\Big[(-\Delta)^{\frac{\alpha}{2}}(u-\mathcal{I}_{N_x}^\frac{\alpha}{2}u)\Big]\Big|\\
&\leq c M_1^{-\frac{1}{2}}\V^{\frac{1}{2}}\Big[(-\Delta)^{\frac{\alpha}{2}}(u-\mathcal{I}_{N_x}^\frac{\alpha}{2}u)\Big] \\
&\leq cM_1^{-\frac{1}{2}}\Big(\int_{\mathbb{B}_r}\Big((-\Delta)^{\frac{\alpha}{2}}(u-\mathcal{I}_{N_x}^\frac{\alpha}{2}u)\Big)^2\Q(\bx,\by)\,{\rm d}\by\Big)^{\frac{1}{2}}
\\& \leq cM_1^{-\frac{1}{2}}\Big(\int_{\mathbb{B}_r}\Big((-\Delta)^{\frac{\alpha}{2}}(u-\mathcal{I}_{N_x}^\frac{\alpha}{2}u)\Big)^2\,{\rm d}\by\Big)^{\frac{1}{2}}.
\end{split}
\end{equation}
Taking the absolute value of both sides of \eqref{EJ_1_1}, and combining with  \eqref{EB1}, and \eqref{estimate_interpolation}, we arrive at
\begin{equation}\label{Eb1bound} 
\big|\mathbb{E}[B_1]\big| \leq cN_x^{\frac{\alpha}{2}-\frac12-r} + cM_1^{-\frac{1}{2}}\Vert u-\mathcal{I}_{N_x}^\frac{\alpha}{2}u\Vert_{H^\alpha}.
\end{equation}
Next we estimate $\mathbb{E}^{(k-1)}[B_2]$. It is evident that 
\begin{equation}\label{u_uk_ident}
\mathcal{I}_{N_x}^\frac{\alpha}{2}u(x) - u^{(k-1)}_\ast(x) = \mathcal{I}_{N_x}^\frac{\alpha}{2}(u-u^{(k-1)}_\ast)(x) = \sum_{i=0}^{N_x}(u(x_i)-u^{(k-1)}_\ast(x_i))l_i^{\frac{\alpha}{2}}(x).
\end{equation}
Due to the mutual independence of the stochastic processes, we obtain from \eqref{u^{(k)}(x_i)} and \eqref{u_uk_ident} that
\begin{eqnarray*}
&&\mathbb{E}^{(k-1)}[B_2] = \mathbb{E}^{(k-1)}\Big[\tphi_{x_j}((-\Delta)^{\frac{\alpha}{2}}(\mathcal{I}_{N_x}^\frac{\alpha}{2}u-u^{(k-1)}_\ast),(\mathcal{I}_{N_x}^\frac{\alpha}{2}u-u^{(k-1)}_\ast)|_{\Omega^c},X_{j,t})\Big]\\
&&\hspace{49pt}= \sum_{i=0}^{N_x}(u(x_i)-u^{(k-1)}_\ast(x_i))\mathbb{E}^{(k-1)}\Big[\tphi_{x_j}((-\Delta)^{\frac{\alpha}{2}}l_i^{\frac{\alpha}{2}}(x),l_i^{\frac{\alpha}{2}}(x)\big|_{\Omega^c},X_{j,t})\Big]
\\&&\hspace{49pt}= \sum_{i=0}^{N_x}(u(x_i)-u^{(k-1)}_\ast(x_i))\mathbb{E}^{(k-1)}\Big[\hphi_{x_j}((-\Delta)^{\frac{\alpha}{2}}l_i^{\frac{\alpha}{2}}(x),l_i^{\frac{\alpha}{2}}(x)\big|_{\Omega^c},X_{j,t})\Big]\\
&&\hspace{49pt}\quad +\sum_{i=0}^{N_x}(u(x_i)-u^{(k-1)}_\ast(x_i))\mathbb{E}^{(k-1)}\Big[(\tphi_{x_j}-\hphi_{x_j})((-\Delta)^{\frac{\alpha}{2}}l_i^{\frac{\alpha}{2}}(x),l_i^{\frac{\alpha}{2}}(x)\big|_{\Omega^c},X_{j,t})\Big]\\
&&\hspace{49pt}= \sum_{i=0}^{N_x}(u(x_i)-u^{(k-1)}_\ast(x_i))\mathbb{E}^{(k-1)}\Big[\hphi_{x_j}((-\Delta)^{\frac{\alpha}{2}}l_i^{\frac{\alpha}{2}}(x),l_i^{\frac{\alpha}{2}}(x)\big|_{\Omega^c},X_{j,t})\Big]\\
&&\hspace{49pt}\quad +\sum_{i=0}^{N_x}(u(x_i)-u^{(k-1)}_\ast(x_i))\mathbb{E}^{(k-1)}\big[e((-\Delta)^{\frac{\alpha}{2}}l_i^{\frac{\alpha}{2}}(x),M_1,x_j)\big].
\end{eqnarray*}
Using the fact that $\mathbb{E}^{(k-1)}[\hphi_{x_j}(\cdot,\cdot,\cdot)] = \mathbb{E}^{(k-1)}[\Phi_{x_j}(\cdot,\cdot,\cdot)]$ and $$\mathbb{E}^{(k-1)}[\Phi_{x_j}((-\Delta)^{\frac{\alpha}{2}}l_i^{\frac{\alpha}{2}}(x),l_i^{\frac{\alpha}{2}}(x)|_{\Omega^c}, X_{j,t})]= l_i^{\frac{\alpha}{2}}(x_j) = \delta_{ij},$$
 we obtain that
\begin{equation*}\label{Ek-1J2}
\begin{aligned}
\mathbb{E}^{(k-1)}[B_2] 
=u(x_j)-u^{(k-1)}_\ast(x_j)+\sum_{i=0}^{N_x}(u(x_i)-u^{(k-1)}_\ast(x_i))\mathbb{E}^{(k-1)}\Big[e((-\Delta)^{\frac{\alpha}{2}}l_i^{\frac{\alpha}{2}}(x),M_1,x_j)\Big].
\end{aligned}
\end{equation*}
A combination of the above equation and \eqref{u^{(k)}(x_i)} leads to
\begin{small}
\begin{equation}\label{err_E_k-1}
\begin{aligned}
\mathbb{E}^{(k-1)}[u^{(k)}_\ast(x_j)-u(x_j)]  &= \mathbb{E}[B_1]  + \sum_{i=0}^{N_x}(u(x_i)-u^{(k-1)}_\ast(x_i))\mathbb{E}^{(k-1)}\Big[e((-\Delta)^{\frac{\alpha}{2}}l_i^{\frac{\alpha}{2}}(x),M_1,x_j)\Big].
\end{aligned}
\end{equation}
\end{small}
Taking the absolute value of both sides of \eqref{err_E_k-1}, and using \eqref{Eb1bound} and the triangle inequality, we derive that
\begin{equation} 
\begin{split}
&\big|\mathbb{E}^{(k-1)}[u^{(k)}_\ast(x_j)-u(x_j)]\big| \leq  cN_x^{\frac{\alpha}{2}-\frac12-r} + cM_1^{-\frac{1}{2}}\Vert u-\mathcal{I}_{N_x}^\frac{\alpha}{2}u\Vert_{H^\alpha }
\\&\quad+cN_x\max_{0\leq i\leq N_x}\Big|(u(x_i)-u^{(k-1)}_\ast(x_i))\Big| \Big|\mathbb{E}^{(k-1)}\Big[e((-\Delta)^{\frac{\alpha}{2}}l_i^{\frac{\alpha}{2}}(x),M_1,x_j)\Big]\Big|
\\& \leq  cN_x^{\frac{\alpha}{2}-\frac12-r} + cM_1^{-\frac{1}{2}}\Vert u-\mathcal{I}_{N_x}^\frac{\alpha}{2}u\Vert_{H^\alpha}+cN_xM_1^{-\frac{1}{2}}\max_{0\leq i\leq N_x}\Big|(u(x_i)-u^{(k-1)}_\ast(x_i))\Big|.
\end{split}
\end{equation}
Taking the expectation and pointwise maximum norm on both sides, and using the law 
of total expectation \eqref{eyy} and the fact that $\big|\mathbb{E}[\mathbb{E}^{(k-1)}[\cdot]]\big| \leq \mathbb{E}\big|\mathbb{E}^{(k-1)}[\cdot]\big|$ leads to the desired result \eqref{err_e_0^k}. 
This completes the proof.
\end{proof}
\begin{remark}{\em 
It is worthwhile to point out that the weak convergence error bound in \eqref{err_e_0^k} is independent of the number of simulations path $M$.    }
\end{remark}

\section{Space-time spectral Monte Carlo method for parabolic equation}\label{sect4} \setcounter{lem}{0} \setcounter{thm}{0}  \setcounter{rem}{0} 	
The proposed spectral Monte Carlo method for Poisson equation could be straightforwardly extended to the solution of the parabolic equation. 
We consider the following parabolic equation driven by $\alpha$-stable Lévy process in a bounded domain $\Omega\subset \mathbb R$ :
\begin{equation}\label{ufg}
\begin{cases}
\partial_t u(\bx,t)+(-\Delta)^{\frac{\alpha}2}u(\bx,t)=f(\bx,t),\;&(\bx,t) \in \ \Omega \times (0,\infty) ,\\[4pt]
u(\bx,t)=g(\bx,t),\quad & (\bx,t) \in \  \Omega^c \times (0,\infty) ,\\[4pt]
u(\bx,0)=u_0(\bx),\quad &\bx \in \Omega.
\end{cases}
\end{equation}
Thanks to the Feynman-Kac formula of \eqref{ufg} (cf. \cite{Su2023}), we can express the probability solution as
\begin{equation}\label{fkluxt}
\begin{split}
u(\bx,t) &= 	\mathbb{E}_{X_{0}^{\alpha }=\bm{x}}\Big[ u_0(X_t^{\alpha})\mathbb{I}_{\tau_{\Omega}^{\bx}>t}+g(X_{\tau_{\Omega}^{\bx}}^{\alpha},\tau_{\Omega}^{\bx})\mathbb{I}_{\tau_{\Omega}^{\bx}\leq t} \Big] + \mathbb{E}_{X_{0}^{\alpha }=\bm{x}}\Big[\int_{0}^{t \wedge \tau_{\Omega}^{\bx}}f(X^{\alpha}_{s},s)\,{\rm d}s\Big]
\\&:=\mathbb{E}\big[\Psi_{\bx,t}(f,g,u_0,X^\alpha_t)\big],
\end{split}
\end{equation}
where $\tau_{\Omega}^{\bx}$ is the first exit time to domain $\Omega$ and $X^{\alpha}_{\tau_{\Omega}^{\bx}}$ is the first exit location on the domain $\mathbb R^{d}\setminus\Omega$, and $\mathbb{E}_{\bm{x}}[\tau_{\Omega}^{\bx}]<\infty$, for all $\bx\in\Omega$.

\subsection{Stochastic algorithm for parabolic equation}
In this part, we will provide a detailed implementation of a stochastic method for computing the approximate solution $u(\bx_j, t_n)$ at given points $\{\bx_j\}$ and $\{t_n\}$. Note that the algorithm description provided in the part only focuses on the high-dimensional case, while the one-dimensional case can be easily recognized as a special case.
It is evident that, for a given time subdivision, there exists a correlation
between the expectation of the movement time of the stochastic process $X_t^\alpha$ within the ball and the length of the time interval, which plays a crucial role in the construction of our algorithm. Thus, for any given grid points $t_n$, we divide the time interval $[0,t_n]$ into $N$ equal parts:
\begin{equation}\label{time division}
\mathcal{T}:=\big\{0=t_{n,0}<t_{n,1}<t_{n,2}<\cdots<t_{n,N-1}<t_{n,N} = t_n\big\},
\end{equation}
where $\Delta t_n = t_{n,\ell}-t_{n,\ell-1} = \frac{t_n}{N}$, with $1\leq \ell\leq N$. To this ends, we define the exit time of the process $X_t^\alpha$ from $\mathbb{B}_r^d(X_{t_{n,\ell-1}}^\alpha)$ as
\begin{equation}
\tau_\ell:= \inf \{s:X_s^\alpha \notin \mathbb{B}_r^n(X_{t_{n,\ell-1}}^\alpha)\},\quad 1\leq \ell\leq N.
\end{equation}

In each time interval $[t_{n,\ell-1},t_{n,\ell})$, the process $X_t^\alpha$ starts from the location $X_{\tau_{n,\ell}}^\alpha$. Within this time interval, the trajectory of $X_t^\alpha$ remains within a ball $\mathbb{B}_r^d(X_{t_{n,\ell-1}}^\alpha)$, and the stochastic process $X_t^\alpha$ takes a jump at time $t_{n,\ell}$ and leaves ball $\mathbb{B}_r^d(X_{t_{n,\ell-1}}^\alpha)$. Hence, we can directly set $\tau_\ell\approx \Delta t_n$. 
The radius of the ball can be chosen  as $r=(\Delta t_n/\widetilde{C}_{d}^\alpha)^{1/\alpha}$ with constant $\widetilde{C}_{d}^\alpha = \frac{\Gamma(\frac d2)}{2^{\alpha}\Gamma(1 + \frac \alpha 2)\Gamma(\frac{d+\alpha}{2})}.$
In this setting, the motion path of the process $X_t^\alpha$ can be approximated by the motion paths of the balls, and each ball, with a fixed radius, is not required to be tangent to the boundary, which differs slightly from the inscribed balls in \cite{Sheng2023}. Now we are ready to present the following result on the location of $X^{\alpha}_{t_{n,\ell}}$ from $X^{\alpha}_{t_{n,\ell-1}}$, which states (cf.\cite{Sheng2024})
\begin{lemma}\label{Xlocation}
Let $\alpha\in(0,2]$ and $X^{\alpha}_{t_{n,\ell-1}}$ be the location of $\alpha$-stable Lévy process at $t_{n,\ell-1}$,
then the location of $\alpha$-stable Lévy process $X^{\alpha}_{t_{n,\ell}}$ can be computed by
\begin{equation}
\label{Xi}
X^{\alpha}_{t_{n,\ell}} = X^{\alpha}_{t_{n,\ell-1}} + J_\ell\cdot
\begin{bmatrix}
\cos\theta_{1}\\
 \sin\theta_{1}\cos\theta_{2}\\\
 \cdots\cdots\\
\sin\theta_{1}\cdots\sin\theta_{d-2}\sin\theta_{d-1}
\end{bmatrix},
\end{equation}
where the $\ell$-th jump distance is given by
 \begin{equation}\label{rhok}
J_\ell:=J_\ell(\omega;r,\alpha)=  \frac{r}{\sqrt{B(1-\frac{\alpha}{2},\frac{\alpha}{2})- B^{-1}(\frac{\pi\,\omega}{\sin(\pi\alpha/2)};1-\frac{\alpha}{2},\frac{\alpha}{2})}},\;\;\;\omega\in(0,1).
 \end{equation}
\end{lemma}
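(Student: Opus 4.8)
The plan is to read Lemma \ref{Xlocation} as an explicit inverse-transform-sampling recipe for the first-exit position of the isotropic $\alpha$-stable process from the ball $\B_r^d(X^\alpha_{t_{n,\ell-1}})$ started at its centre. Under the modelling step $\tau_\ell\approx\Delta t_n$ described above, the trajectory stays inside the ball on $[t_{n,\ell-1},t_{n,\ell})$ and leaves it by a single jump at $t_{n,\ell}$, so I identify $X^\alpha_{t_{n,\ell}}$ with the exit location $X^\alpha_{\tau_\ell}$ and write $X^\alpha_{t_{n,\ell}}-X^\alpha_{t_{n,\ell-1}}=J_\ell\,\Theta$, where $J_\ell:=|X^\alpha_{\tau_\ell}-X^\alpha_{t_{n,\ell-1}}|$ is the exit radius and $\Theta\in\sph^{d-1}$ is the exit direction. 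The first thing to establish is that $J_\ell$ and $\Theta$ are independent with $\Theta$ uniform on $\sph^{d-1}$: this is immediate from the rotational invariance of the symmetric $\alpha$-stable process, since starting from the centre the law of the exit displacement is invariant under every orthogonal map, which forces the product structure and the uniform angular marginal. Expanding the uniform direction in spherical coordinates reproduces the column vector in \eqref{Xi}, so it remains only to pin down the law of $J_\ell$.

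For the radial part I would invoke the classical Poisson kernel (harmonic measure density) of the isotropic $\alpha$-stable process on $\B_r^d$: for $|\by|>r$ the exit density from the centre is proportional to $r^{\alpha}(|\by|^2-r^2)^{-\alpha/2}|\by|^{-d}$. Integrating this over the sphere $\{|\by|=\rho\}$ against the surface element $\rho^{d-1}$ cancels the ambient dimension dependence and leaves a one-dimensional radial density proportional to $r^{\alpha}\rho^{-1}(\rho^2-r^2)^{-\alpha/2}$ on $(r,\infty)$. I then perform the change of variables $\xi=r^2/\rho^2\in(0,1)$ (equivalently $\xi=r^2/J_\ell^2$), which turns the radial density into the Beta$(\frac{\alpha}{2},1-\frac{\alpha}{2})$ density $\propto \xi^{\frac{\alpha}{2}-1}(1-\xi)^{-\frac{\alpha}{2}}$; Euler's reflection formula $\Gamma(\tfrac{\alpha}{2})\Gamma(1-\tfrac{\alpha}{2})=\pi/\sin(\pi\alpha/2)$ identifies the normalising constant and is exactly what produces the factor $\pi/\sin(\pi\alpha/2)$ in \eqref{rhok}.

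Finally I would apply inverse transform sampling to $\xi$. Its distribution function is the regularised incomplete Beta function, so setting it equal to a uniform variable $\omega\in(0,1)$ gives $B(\xi;\tfrac{\alpha}{2},1-\tfrac{\alpha}{2})=\pi\omega/\sin(\pi\alpha/2)$; applying the incomplete-Beta symmetry $B(\xi;a,b)=B(a,b)-B(1-\xi;b,a)$ with $(a,b)=(\tfrac{\alpha}{2},1-\tfrac{\alpha}{2})$ and relabelling $\omega\leftrightarrow 1-\omega$ converts this into a statement about $B^{-1}(\,\cdot\,;1-\tfrac{\alpha}{2},\tfrac{\alpha}{2})$, from which solving $J_\ell=r/\sqrt{\xi}$ yields \eqref{rhok}. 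As a consistency check I would let $\alpha\to2$: there the Poisson mass collapses onto the sphere $|\by|=r$, $J_\ell\to r$ deterministically, and the formula must reduce accordingly, recovering the Brownian walk-on-spheres step.

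I expect the genuine obstacle to be the radial-law step: justifying the $\alpha$-stable Poisson kernel on the ball (a nontrivial potential-theoretic input, available from Blumenthal--Getoor--Ray type computations or citable from \cite{Kyprianou2018,Sheng2024}) and then executing the Beta change of variables cleanly enough that the parameters, the $r$-scaling, and the $\pi/\sin(\pi\alpha/2)$ normalisation all land in the precise form of \eqref{rhok}; by contrast the angular part is a soft symmetry argument and the final inversion is mechanical once the incomplete-Beta identities are in place.
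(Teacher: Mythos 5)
Your proposal cannot be checked against an internal argument, because the paper offers none: Lemma \ref{Xlocation} is quoted verbatim from the submitted companion work \cite{Sheng2024} (the identical sampling formula \eqref{radius} is likewise cited from \cite{Sheng2023}). Your derivation therefore has to stand on its own, and its skeleton is the right one — indeed the standard one: rotational invariance factors the exit displacement into an independent uniform direction and a radius; the Blumenthal--Getoor--Ray Poisson kernel of the ball, integrated over spheres, leaves the radial density proportional to $r^{\alpha}\rho^{-1}(\rho^2-r^2)^{-\alpha/2}$ on $(r,\infty)$ with the dimension cancelling; the substitution $\xi=r^2/J_\ell^2$ produces a ${\rm Beta}(\frac{\alpha}{2},1-\frac{\alpha}{2})$ law whose normalization $\pi/\sin(\pi\alpha/2)$ comes from Euler's reflection formula. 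All of these intermediate steps check out.

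The gap is in the very last step, the one you describe as ``mechanical.'' Carrying out your own inversion exactly: from $B(\xi;\frac{\alpha}{2},1-\frac{\alpha}{2})=\pi\omega/\sin(\pi\alpha/2)$, the symmetry $B(x;a,b)=B(a,b)-B(1-x;b,a)$ and the relabelling $\omega\leftrightarrow 1-\omega$ give $\xi = 1-B^{-1}\big(\frac{\pi\omega}{\sin(\pi\alpha/2)};1-\frac{\alpha}{2},\frac{\alpha}{2}\big)$, hence
\begin{equation*}
J_\ell=\frac{r}{\sqrt{\,1-B^{-1}\big(\frac{\pi\omega}{\sin(\pi\alpha/2)};1-\frac{\alpha}{2},\frac{\alpha}{2}\big)}}
=\frac{r}{\sqrt{\,B^{-1}\big(\frac{\pi\omega}{\sin(\pi\alpha/2)};\frac{\alpha}{2},1-\frac{\alpha}{2}\big)}}\,,
\end{equation*}
which is \emph{not} \eqref{rhok}: there the constant $1$ is replaced by the complete Beta function $B(1-\frac{\alpha}{2},\frac{\alpha}{2})=\pi/\sin(\pi\alpha/2)$. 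The discrepancy is not cosmetic. Since $B^{-1}(\cdot\,;1-\frac{\alpha}{2},\frac{\alpha}{2})$ takes values in $(0,1)$ while $\pi/\sin(\pi\alpha/2)\geq\pi$, formula \eqref{rhok} as written forces $J_\ell<r$, i.e.\ the ``jump'' would land \emph{inside} the ball it is supposed to exit, contradicting the construction preceding the lemma (the process leaves $\mathbb{B}_r^d(X^{\alpha}_{t_{n,\ell-1}})$ at $t_{n,\ell}$) and making \eqref{Xi} meaningless as an exit position. So either \eqref{rhok} (and \eqref{radius}) carries a typo — the complete Beta should be $1$ — in which case your derivation is the correct proof of the corrected statement; or $B^{-1}$ is meant in some nonstandard sense that neither you nor the text defines. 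A proof that ends by asserting agreement with \eqref{rhok} without confronting this mismatch is incomplete: the one step that genuinely needed care is exactly the one you waved through.
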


Different from the simulation for Poisson equation in \eqref{XiforPoisson}, here the radius of each ball is fixed, which is determined by the time interval $\Delta t_n$.
For any given $\bm{x}_j \in \Omega$ and $t_n$, we draw a ball $\mathbb{B}_r^d(\bm{x}_j)$ inside the domain $\Omega$. Then, using Lemma\,\ref{Xlocation}, we can easily obtain the location of a sequence of balls with radius $r$ centered at
\begin{equation*}
\bx_j=X^{\alpha}_{t_{n,0}}<X^{\alpha}_{t_{n,1}}<\cdots<X^{\alpha}_{t_{n,N}}.
\end{equation*}
Now, we only need to determine whether the location of the center of $\ell$-th ball $X^\alpha_{n,\ell}$ with $1\leq\ell\leq N$ is within the domain $\Omega$. For this purpose, we define the following index set
\begin{equation}\label{mast}
 L=\begin{cases}
     N,\;\; &\text{if}\; X^\alpha_{n,\ell}\in \Omega,\;\forall 1\leq \ell\leq N,\\[2pt]
     p,\;\; &\text{if}\; X^\alpha_{n,\ell}\in \Omega,\;\forall 1\leq \ell\leq p<N\; \&\;X^\alpha_{n,p+1}\notin \Omega.
 \end{cases}  
\end{equation}
Therefore, we can obtain the $i$-th experiment of stochastic solution at $\bx_j$ and $t_n$: 
\begin{equation}\label{phidef1}
\Psi^\ast_{\bm{x}_j,t_n}(f,g,u_0,X_\ell^i) = u_0(X^{i}_{t_{n,L}})+\mathcal{Q}^i_{L}[f],\;\;\; \text{if}\; X^i_{n,\ell}\in \Omega,\;\forall 1\leq \ell\leq N,
\end{equation}
and 
\begin{equation}\label{phidef2}
\Psi^\ast_{\bm{x}_j,t_n}(f,g,u_0,X_\ell^i) = g(X^{i}_{t_{n,L}},t_{L})+\mathcal{Q}^i_{L}[f],\;\;\; \text{if}\; X^\alpha_{n,\ell}\in \Omega,\;\forall 1\leq \ell\leq p<N\; \&\;X^\alpha_{n,p+1}\notin \Omega,
\end{equation}
where $X^{i}_{t_{n,\ell}}$ is short for $X^{\alpha,i}_{t_{n,\ell}}$, and $\mathcal{Q}^i_{L}[f]$ is given by
\begin{equation}\label{interIQ}
\mathcal{Q}^i_{L}[f] = \frac{\Delta t_n}{2}\Big[f(X^i_{t_{n,0}},t_{n,L})+2\sum_{\ell=1}^{L-1}f(X^i_{t_{n,\ell}},t_{n,L}-t_{n,\ell})+f(X_{t_{n,L}}^i,0)\Big],
\end{equation}
with $\Delta t_n = t_{n,\ell}-t_{n,\ell-1} = \frac{t_n}{N}$, with $1\leq \ell\leq N$. 
Finally, we can obtain the numerical solution via
\begin{equation}\label{psi_def}
u_\ast(\bm{x}_j,t_n) =
\frac{1}{M}\sum_{i=1}^M \Psi^\ast_{\bm{x}_j,t_n}(f,g,u_0,X_\ell^i),
\end{equation}
where $\Psi^\ast_{\bm{x}_j,t_n}(\cdot)$ are defined in \eqref{phidef1} and \eqref{phidef2}.
\subsection{Space-time spectral interpolation}\label{subsect:ST spectral interpolation}
 Let $L_l(x)$, $x\in (-1,1)$ be the standard
Legendre polynomial of degree $l.$ The shifted Legendre polynomial
$\tilde{L}_{l}(t)$, $t\in (0,T)$ is defined by
$$\tilde{L}_{l}(t)=L_l\Big(\frac{2t-T}{T}\Big),\quad  l=0,1,2,\cdots.$$
The set of $\tilde{L}_{l}(t)$ is a complete $L^2(0,T)$-orthogonal system,
namely,
\begin{equation}\label{eq:2.7}
  \dint_{0}^T\tilde{L}_l(t)\tilde{L}_{m}(t)\,\d t=\dfrac{T}{2l+1}\delta_{l,m}.
\end{equation}
Denote the shifted Legendre-Gauss quadrature points and weights by $\{t_k,\omega_k\}_{k=0}^{N_t}$. We define by $h_j(t)$ the Lagrange interpolating basis polynomials associated with $\{t_k\}_{k=0}^{N_t}$. 
Using \eqref{eq:2.7}, the Lagrange interpolating basis $h_j(t)$ can be expressed by
\begin{equation}\label{Lagbas}
h_j(t)=\frac{(t-t_0)\cdots (t-t_{N_t})}{(t_j-t_0)\cdots (t_j-t_{N_t})}=\sum_{q=0}^{N_t}b_{qj}\tilde{L}_q(t), \;\;\;\text{where}\;b_{qj}=\frac{2q+1}{2}\tilde{L}_q(t_j)\omega_j.
\end{equation}
Now we turn to the space-time interpolation. To this end, we denote $\mathcal{I}_{N_t}$ be the interpolation operator associated with the grids $\{t_k\}_{k=0}^{N_t}$. To match the singularity, we employ the generalized Lagrange interpolation $\ina$ in space and standard Lagrange interpolation $\mathcal{I}_{N_t}$ in time, that is,
\begin{equation}\label{ujext}
\ina\mathcal{I}_{N_t}u(x,t) = \sum_{i=0}^{N_x}\sum_{j=0}^{N_t}u(x_i,t_j)l_i^{(\frac{\alpha}{2},\frac{\alpha}{2})}(x)h_j(t).
\end{equation}
Using \eqref{interpolationex} and \eqref{Lagbas}, we can rewrite \eqref{ujext} as 
\begin{equation}\label{1xx}
\Ia\mathcal{I}_{N_t}u(x,t)  
= \sum_{p=0}^{N_x}\sum_{q=0}^{N_t}\hat{u}_{pq}\mathcal{J}_p^{(\frac{\alpha}{2},\frac{\alpha}{2})}(x)\tilde{L}_q(t),\;\;\text{where}\;\hat{u}_{pq} = \sum_{i=0}^{N_x}\sum_{j=0}^{N_t} u(x_i,t_j) c_{p i} b_{q j}.
\end{equation}
with $c_{pi}$ and $b_{q j}$ are defined in \eqref{interpolationex} and \eqref{Lagbas}, respectively.
We can derive from \eqref{1xx} and \eqref{derpol} that
\begin{equation}\label{fracdifxx}
\begin{split}
(-\Delta)^{\frac{\alpha}{2}}\Ia\mathcal{I}_{N_t}u(x,t)
=\sum_{p=0}^{N_x}\sum_{q=0}^{{N_t}}\hat{u}_{pq}(-\Delta)^{\frac{\alpha}{2}} \mathcal{J}_p^{(\frac{\alpha}{2},\frac{\alpha}{2})}(x)\tilde{L}_q(t)
= \sum_{p=0}^{N_x} \sum_{q=0}^{{N_t}} \tilde{u}_{pq} P_p^{(\frac{\alpha}{2},\frac{\alpha}{2})}(x)\tilde{L}_q(t),
\end{split}
\end{equation}
where the connection coefficients is given by $$\tilde{u}_{pq} = \sum_{i=0}^{N_x} \sum_{j=0}^{{N_t}} \frac{\Gamma(p+\alpha+1)}{p!}u(x_i,t_j)c_{p i} b_{q j}.$$
Recall the derivative of Legendre polynomials \cite[(3.176b)]{Shen2011} and use the chain rule, we arrive at 
\begin{equation*}
\partial_t\tilde{L}_n(t)=\sum_{\substack{q=0 \\ q+n \text { odd }}}^{n-1}\frac{2(2q+1)}{T}\tilde{L}_q(t),
\end{equation*}
which together with \eqref{1xx} yields
\begin{equation}\label{dertt1}
\begin{split}
\partial_t \Ia\mathcal{I}_{N_t}u(x,t)
&=\sum_{p=0}^{N_x}\sum_{n=0}^{{N_t}}\hat{u}_{pn}\mathcal{J}_p^{(\frac{\alpha}{2},\frac{\alpha}{2})}(x)\partial_t\tilde{L}_n(t) 
= \sum_{p=0}^{N_x} \sum_{q=0}^{{N_t-1}} \check{u}_{pq}\mathcal{J}_p^{(\frac{\alpha}{2},\frac{\alpha}{2})}(x)\tilde{L}_q(t),
\end{split}
\end{equation}
where 
\begin{equation}\label{derttcoef}
\check{u}_{pq}=\sum_{\substack{n=q \\ n+q \text { odd }}}^{N_t}\hat{u}_{pn}\frac{2(2q+1)}{T} = \sum_{\substack{n=q \\ n+q \text { odd }}}^{N_t}\sum_{i=0}^{N_x}\sum_{j=0}^{N_t} u(x_i,t_j) c_{p i} b_{n j}\frac{2(2q+1)}{T}.
\end{equation}
\subsection{Implementation of space-time spectral Monte Carlo algorithm}\label{algormxt}
In this section, we restrict our attention to the following Parabolic equation with $\alpha\in(0,2]$ and homogeneous Dirichlet boundary conditions in one dimension, i.e., $\Omega=(-1,1)$:
\begin{equation}\label{ufg_1}
\begin{cases}
\partial_t u(x,t)+(-\Delta)^{\frac{\alpha}2}u(x,t)=f(x,t),\;&(x,t) \in \ \Omega \times (0,\infty) ,\\[4pt]
u(x,t)=0,\quad & (x,t) \in \  \Omega^c \times (0,\infty) ,\\[4pt]
u(x,0)=u_0(x),\quad &x \in \Omega.
\end{cases}
\end{equation}
As with the Poisson equation in section \ref{sect2}, the limiting case of \eqref{ufg_1} when $\alpha\rightarrow2$ reduces to the classical Parabolic equation, where the main difference lies in the global boundary condition $\Omega^c$ replaced by the local one $\partial\Omega$. 
Using an argument similar to that of Poisson equation, we can easily deal with nonhomogeneous Dirichlet boundary conditions. 

Now, we describe the numerical implementation of the highly accurate stochastic iteration algorithm for \eqref{ufg_1}. To summarize, the detailed implementation involves the following steps:

\smallskip
\underline{\bf Step 1:}\, Using the modified walk-on-spheres method to compute the initial iteration solution of \eqref{ufg_1} denoted by $u^{(1)}_\ast(x,t)$ at Jacobi-Gauss points $\{x_j:=x^{(\frac{\alpha}2,\frac{\alpha}2)}_j\}_{j=0}^{N_x}$ and the shifted Legendre-Gauss points $\{t_n\}_{n=0}^{N_t}$ over $(0,T)$.
\begin{equation*}
u^{(1)}_\ast(x_j,t_n)=\frac{1}{M}\sum_{m=1}^M\begin{cases}
 u_0(X^{i}_{t_{n,L}})+\mathcal{Q}^i_{L}[f],\quad & \text{if}\; X^i_{n,\ell}\in \Omega,\;\forall 1\leq \ell\leq N,\\[4pt]
g(X^{i}_{t_{n,L}},t_{L})+\mathcal{Q}^i_{L}[f],\quad & \text{if}\; X^i_{n,\ell}\in \Omega,\;\forall 1\leq \ell\leq p<N\; \&\;X^i_{n,p+1}\notin \Omega,
\end{cases}
\end{equation*}
where $M$ denotes the number of paths, $L$ and $\mathcal{Q}^i_{L}[f]$ are defined in 
\eqref{mast} and \eqref{interIQ}, respectively.

\smallskip
\underline{\bf Step 2:}\, Compute the two derivatives of $u^{(1)}_\ast(x,t)$ from initial data $\{u^{(1)}_\ast(x_j,t_n)\}_{j=0,n=0}^{N_x,N_t}$, i.e., $ (-\Delta)^{\frac{\alpha}2}u^{(1)}_\ast(x,t)$ and $\partial_tu^{(1)}_\ast(x,t)$. 
More precisely, we can construct the space-time interpolation \eqref{ujext} from initial data $\{u^{(1)}_\ast(x_j,t_n)\}$, and write $u^{(1)}_\ast(x,t)=\sum_{i=0}^{N_x}\sum_{j=0}^{N_t}u^{(1)}_\ast(x_i,t_j)l_i^{(\frac{\alpha}{2},\frac{\alpha}{2})}(x)h_j(t)$. Thus, by \eqref{fracdifxx} and \eqref{dertt1}, the two derivative of $u^{(1)}$ can be computed by
\begin{equation*} 
\begin{split}
(-\Delta)^{\frac{\alpha}{2}}u^{(1)}_\ast(x,t)= \sum_{p=0}^{N_x} \sum_{q=0}^{{N_t}} \tilde{u}_{pq}^{(1)} P_p^{(\frac{\alpha}{2},\frac{\alpha}{2})}(x)\tilde{L}_q(t),\;\;\;\partial_t u^{(1)}_\ast(x,t)=\sum_{p=0}^{N_x} \sum_{q=0}^{{N_t-1}}\check{u}_{pq}^{(1)}\mathcal{J}_p^{(\frac{\alpha}{2},\frac{\alpha}{2})}(x)\tilde{L}_q(t),
\end{split}
\end{equation*}
where the coefficients are given by
\begin{equation*}
    \tilde{u}^{(1)}_{pq} = \sum_{i=0}^{N_x} \sum_{j=0}^{{N_t}} \frac{\Gamma(p+\alpha+1)}{p!}u^{(1)}_\ast(x_i,t_j)c_{p i} b_{q j},\;\;\;
    \check{u}_{pq}^{(1)}=\sum_{\substack{n=q \\ n+q \text { odd }}}^{N_t}\sum_{i=0}^{N_x}\sum_{j=0}^{N_t} u^{(1)}_\ast(x_i,t_j) c_{p i} b_{n j}\frac{2(2q+1)}{T}.
\end{equation*}

\underline{\bf Step 3:}\, Using the walk-on-spheres method to compute the residual function $\varepsilon = u-u^{(1)}_\ast$ from the following 
\begin{equation}\label{cvmxt}
\begin{cases}
\partial_t\varepsilon+ (-\Delta)^{\frac{\alpha}2}\varepsilon = f^{(1)}, & \text { in } \ \Omega \times (0,\infty) ,\\[4pt]
\varepsilon = 0, & \text { on }\ \Omega^c \times (0,\infty) ,\\[4pt]
\varepsilon|_{t=0} = 0, & \text { in } \ \Omega,
\end{cases}
\end{equation}
where the updated right hand side source function $f^{(1)} = f-\partial_tu^{(1)}_\ast - (-\Delta)^{\frac{\alpha}2} u^{(1)}_\ast.$  Similarly, through the Feynman-Kac formula and walk-on-spheres method, we have the numerical solution $\varepsilon^{(1)}_\ast$ of \eqref{cvmxt}
\begin{equation*}
\varepsilon^{(1)}_\ast(x_j,t_n)=\frac{1}{M}\sum_{m=1}^M\mathcal{Q}^i_{L}[f^{(1)}].
\end{equation*}

\underline{\bf Step 4:}\, Update the solution by $u^{(2)}_\ast(x,t)=u^{(1)}_\ast(x,t)+\varepsilon^{(1)}_\ast(x,t).$ Clearly, we can expect that the updated solution $u^{(2)}_\ast$ is more accurate than the previous one $u^{(1)}_\ast$.

In actual computation,  we repeat Steps 2-4 until the maximum absolute difference between the two
solutions $u^{(k-1)}_\ast$ and $u^{(k)}_\ast$ is less than the desired tolerance or the machine accuracy. 
\subsection{The error estimate for ST-SMC}
\setcounter{lem}{0} \setcounter{thm}{0}  \setcounter{rem}{0}

In this section, we shall analyze and characterize the convergence of the space-time spectral Monte Carlo iteration algorithm, where we aim to estimate the error bounds in the sense that
\begin{equation}
E^\infty_{k,T}:=\max_{0\leq i \leq N_x,0\leq j \leq N_t}\big|\mathbb{E}(u^{(k)}_\ast(x_i,t_j)-u(x_i,t_j))\big|.
\end{equation}
where $\{x_i\}_{i=0}^{N_x}$ denote the Jacobi-Gauss points, $\{t_j\}_{j=0}^{N_t}$ denote the Legendre-Gauss points.

\begin{lem}\label{lem4.2}
Let $\alpha \in (0,2]$ and denote $\tilde u(x,t):=\oaa(x)u(x,t)$. Suppose that $\partial_x^{r-1}\tilde{u}(x,t)$ and $\partial_t^{s-1}\tilde{u}(x,t)$ are absolutely continuous on $\Lambda\times I$ for some $r,s\geq 1$, and the variations ${\rm Var}[\partial_x^{r}\tilde{u}]<\infty$ and ${\rm Var}[\partial_t^{s}\tilde{u}]<\infty$, then
\begin{equation}\label{inter_space-timestate}
\begin{aligned}
\big\| \Ia\mathcal{I}_{N_t}  u -  u \big\|_{L^{\infty}}\leq cN_x^{\frac{\alpha}{2}-\frac12-r}N^{\frac{1}2}_t + cN_t^{-s},
\end{aligned}
\end{equation}
where $c$ is a positive constant independent of $r$, $s$, $N_x$, and $N_t$.
\end{lem}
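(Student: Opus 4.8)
The plan is to reduce the two-dimensional estimate to the one-dimensional results already at hand---Lemma~\ref{lem3.1} for the spatial direction and the classical $L^\infty$ bound \eqref{finftynorm} for the temporal direction---by exploiting that the operators $\Ia$ and $\mathcal{I}_{N_t}$ act on different variables and hence commute. The starting point is the telescoping identity
\begin{equation*}
\Ia\mathcal{I}_{N_t} u - u = \mathcal{I}_{N_t}\big(\Ia u - u\big) + \big(\mathcal{I}_{N_t} u - u\big),
\end{equation*}
which I obtain by adding and subtracting $\mathcal{I}_{N_t}u$ and using $\Ia\mathcal{I}_{N_t}=\mathcal{I}_{N_t}\Ia$. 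I would then bound the two summands in $L^\infty(\Lambda\times I)$ separately; each carries one of the two terms appearing on the right-hand side of \eqref{inter_space-timestate}.

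For the pure temporal error $\mathcal{I}_{N_t}u-u$ I would freeze $x$ and apply \eqref{finftynorm} in $t$ with the Legendre weight, i.e.\ $\gamma=\max\{0,0\}=0$, which yields the rate $N_t^{-\frac12-s}\le N_t^{-s}$. Because $u=\oa\tilde u$ and $\oa$ is independent of $t$, the weight factors out of the time interpolation, so $\mathcal{I}_{N_t}u-u=\oa(\mathcal{I}_{N_t}\tilde u-\tilde u)$ with $\|\oa\|_{L^\infty}\le c$; the hypothesis ${\rm Var}[\partial_t^s\tilde u]<\infty$ (uniform in $x$) then supplies the required temporal regularity and gives $\|\mathcal{I}_{N_t}u-u\|_{L^\infty}\le cN_t^{-s}$.

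For the mixed term $\mathcal{I}_{N_t}(\Ia u-u)$ I would first use Lemma~\ref{lem3.1}, applied for each fixed $t$ with the hypothesis ${\rm Var}[\partial_x^r\tilde u]<\infty$ (uniform in $t$), to get $\|\Ia u-u\|_{L^\infty}\le cN_x^{\frac\alpha2-\frac12-r}$ uniformly in $t$. Applying $\mathcal{I}_{N_t}$ and passing the supremum norm through costs the Lebesgue constant of Legendre--Gauss interpolation, which is known to grow like $\sqrt{N_t}$; hence $\|\mathcal{I}_{N_t}(\Ia u-u)\|_{L^\infty}\le c\sqrt{N_t}\,\|\Ia u-u\|_{L^\infty}\le cN_x^{\frac\alpha2-\frac12-r}N_t^{\frac12}$. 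Adding the two contributions produces \eqref{inter_space-timestate}.

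The step I expect to be the main obstacle is the control of the mixed term: one must quantify the $L^\infty\to L^\infty$ operator norm of $\mathcal{I}_{N_t}$, which is precisely the source of the $N_t^{1/2}$ factor and explains why the tensor-product estimate is not merely the sum of the two one-dimensional rates. A secondary technical point is ensuring that the one-dimensional estimates hold uniformly in the frozen variable, so that taking the supremum over $\Lambda\times I$ is legitimate; this is where the product-domain bounded-variation hypotheses ${\rm Var}[\partial_x^r\tilde u]<\infty$ and ${\rm Var}[\partial_t^s\tilde u]<\infty$ are genuinely used.
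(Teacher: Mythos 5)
Your proposal is correct and follows essentially the same route as the paper: the identical splitting $\Ia\mathcal{I}_{N_t}u-u=\mathcal{I}_{N_t}(\Ia u-u)+(\mathcal{I}_{N_t}u-u)$, the Legendre--Gauss Lebesgue-constant bound $\sum_j|h_j(t)|=\mathcal{O}(N_t^{1/2})$ for the mixed term, Lemma~\ref{lem3.1} for the spatial factor, and \eqref{finftynorm} with $\gamma=0$ for the temporal term. Your added remarks on uniformity in the frozen variable and on the weight $\oa$ factoring out of the time interpolation are sensible elaborations of details the paper leaves implicit, but they do not change the argument.
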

\begin{proof}
Since the Legendre polynomials can be viewed as a special case of the Jacobi polynomials with $\alpha = \beta =0$, it follows that
\begin{equation}\label{hjbound}
\sum^{N_t}_{j=0}|h_j(t)| = \mathcal{O}(N_t^{\frac{1}{2}}).\\
\nonumber
\end{equation}
Then, we obtain from \eqref{estimate_interpolation}, \eqref{hjbound}, and \eqref{finftynorm} with $\gamma=0$ that
\begin{equation*}
\begin{aligned}
\big\| \Ia\mathcal{I}_{N_t}  u -  u \big\|_{L^{\infty}}
&\leq \big\| \mathcal{I}_{N_t}(\Ia u - u) \big\|_{L^{\infty}} + \big\| \mathcal{I}_{N_t}  u - u \big\|_{L^{\infty}}\\
&\leq  N_t^{\frac{1}{2}}\big\| \Ia  u - u \big\|_{L^{\infty}}+ \big\| \mathcal{I}_{N_t}  u - u \big\|_{L^{\infty}} \\
&\leq cN_x^{\frac{\alpha}{2}-\frac12-r}N_t^{\frac{1}{2}}+ cN_t^{-s}.
\end{aligned}
\end{equation*}
This ends the proof.
\end{proof}

Now we prove the error bound of $u(x,t)-u_\ast(x,t)$. As with \eqref{iteration_formula}, we can rewrite the $u^{(k)}_\ast(x,t)$ in terms of $u^{(k-1)}_\ast(x,t)$ as 
\begin{small}
\begin{equation*}\label{iteration_formula1}
\begin{aligned}
u^{(k)}_\ast(x_i,t_j) = u^{(k-1)}_\ast(x_i,t_j)+ \frac{1}{M}\sum_{m=1}^{M}{\Psi}^\ast_{x_{i},t_{j},m}\big(\mathcal{A}(u-u^{(k-1)}_\ast),(u-u^{(k-1)}_\ast)|_{\Omega^c},(u-u^{(k-1)}_\ast)|_{t=0},X_t\big),
\end{aligned}   
\end{equation*}
\end{small}
where the operator $\mathcal{A} = \partial_t + (-\Delta)^{\frac{\alpha}2}$. In view of \eqref{fkluxt} and \eqref{psi_def}, we further introduce the error functional 
 \begin{equation}
 e(f,x,t) =\Psi_{x,t}(f,g,u_0,X^\alpha_t)-\Psi^\ast_{x,t}(f,g,u_0,X^i_t)= Q_{L}^i[f] - \int_{0}^{t \wedge \tau_{\Omega}^{\bx}}f(X^{\alpha}_{s},s)\,{\rm d}s.
 \end{equation}

Now we state the convergence result for the bias.
\setcounter{theorem}{0}
\begin{theorem}\label{thm4.1}
Let $\alpha \in (0,2)$, $u(x,t)$ be the solution of \eqref{ufg} and $u^{(k)}_\ast(x,t)$ be the numerical solution of SMC described in section {\rm\ref{algormxt}}. 
Denote $\tilde u(x,t):=\oaa(x)u(x,t)$, suppose that $\partial_x^{r-1}\tilde{u}(x,t)$ and $\partial_t^{s-1}\tilde{u}(x,t)$ are absolutely continuous on $\Omega\times I$ for some $r,s\geq 1$, and the variations ${\rm Var}[\partial_x^{r}\tilde{u}]<\infty$ and ${\rm Var}[\partial_t^{s}\tilde{u}]<\infty$, then we have 
\begin{equation}\label{eqn:thm4.1_1}
E_{k,T}^{\infty} \leq \widetilde{\rho}E_{k-1,T}^{\infty} + cN_x^{\frac{\alpha}{2}-\frac12-r}N_t^{\frac{1}{2}}+ cN_t^{-s} +\max_{i,j}\Big|\mathbb{E}\big[e(\mathcal{A}(u-\mathcal{I}_{N_x}^\frac{\alpha}{2}\mathcal{I}_{N_t}u),x_i,t_j)\big]\Big|,
\end{equation}    
where $0\leq i \leq N_x,0\leq j \leq  N_t$, $c$ is a positive constant independent of $N_x$, $N_t$, $k$,  and $\widetilde{\rho}$ is given by
\begin{equation*}
\widetilde{\rho} = N_xN_t\max_{i,j}\Big|\mathbb{E}[e(\mathcal{A}(l_n^{\frac{\alpha}{2}}(x)h_p(t)),x_i,t_j)]\Big|.  
\end{equation*}
For any fixed $N_x, N_t$ and sufficiently large $M$, one has $\widetilde{\rho} < 1$. Thus, the convergence of $\{E^\infty_{k,T}\}_k$, is geometric at rate $\widetilde{\rho}$, up to a threshold equal to
\begin{equation}\label{eqn:thm4.1_2} 
E_{k,T}^{\infty} \leq \frac{1}{1-\widetilde{\rho}}\Big[cN_x^{\frac{\alpha}{2}-\frac12-r}N_t^{\frac{1}{2}}+ cN_t^{-s} +\max_{i,j}\Big|\mathbb{E}\big[e(\mathcal{A}(u-\mathcal{I}_{N_x}^\frac{\alpha}{2}\mathcal{I}_{N_t}u),h,x_i,t_j)\big]\Big|\Big].
\end{equation}    
\end{theorem}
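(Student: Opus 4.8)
The plan is to transcribe the argument of Theorem~\ref{thm3.3} into the space--time setting, systematically replacing the spatial interpolant $\ina$ by the tensor interpolant $\ina\mathcal{I}_{N_t}$, the Poisson functional $\tphi$ by the parabolic functional $\Psi^\ast$, and the operator $(-\Delta)^{\frac{\alpha}2}$ by $\mathcal{A}=\partial_t+(-\Delta)^{\frac{\alpha}2}$. Since \eqref{eqn:thm4.1_2} is just the geometric summation of the one--step bound \eqref{eqn:thm4.1_1}, I would prove only \eqref{eqn:thm4.1_1}. Exactly as in \eqref{decomposition}, I would exploit the cardinal property that $u^{(k-1)}_\ast$ interpolates itself on the grid, $\ina\mathcal{I}_{N_t}u^{(k-1)}_\ast=u^{(k-1)}_\ast$, to write
\[
u-u^{(k-1)}_\ast=\big(u-\ina\mathcal{I}_{N_t}u\big)+\ina\mathcal{I}_{N_t}\big(u-u^{(k-1)}_\ast\big),
\]
substitute this into the space--time analogue of the iteration formula \eqref{iteration_formula} for $u^{(k)}_\ast(x_i,t_j)$, and split the increment by linearity of $\Psi^\ast$ in its data as $\mathbb{E}^{(k-1)}[u^{(k)}_\ast-u^{(k-1)}_\ast]=\mathbb{E}^{(k-1)}[B_1]+\mathbb{E}^{(k-1)}[B_2]$, where $B_1$ carries the data $u-\ina\mathcal{I}_{N_t}u$ and $B_2$ the data $\ina\mathcal{I}_{N_t}u-u^{(k-1)}_\ast$.

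Since $B_1$ depends only on the exact solution and the fixed interpolant, it is independent of the first $k-1$ stages, so $\mathbb{E}^{(k-1)}[B_1]=\mathbb{E}[B_1]$. Splitting off the time--quadrature remainder $e$ (so that $\mathbb{E}[\Psi^\ast]=\mathbb{E}[\Psi]-\mathbb{E}[e]$) and invoking the parabolic Feynman--Kac identity \eqref{fkluxt}, which reconstructs the value at $(x_i,t_j)$ of the solution whose source, lateral data and initial data are $\mathcal{A}v$, $v|_{\Omega^c}$ and $v|_{t=0}$ for $v=u-\ina\mathcal{I}_{N_t}u$, I would obtain
\[
\mathbb{E}[B_1]=\big(u-\ina\mathcal{I}_{N_t}u\big)(x_i,t_j)-\mathbb{E}\big[e\big(\mathcal{A}(u-\ina\mathcal{I}_{N_t}u),x_i,t_j\big)\big].
\]
The first term is bounded in $L^\infty$ by Lemma~\ref{lem4.2}, producing $cN_x^{\frac{\alpha}2-\frac12-r}N_t^{1/2}+cN_t^{-s}$, and the second is precisely the residual term displayed in \eqref{eqn:thm4.1_1}.

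For $B_2$ I would expand, using \eqref{ujext}, $\ina\mathcal{I}_{N_t}u-u^{(k-1)}_\ast=\sum_{n,p}\big(u(x_n,t_p)-u^{(k-1)}_\ast(x_n,t_p)\big)\,l_n^{\frac{\alpha}2}(x)h_p(t)$, and use linearity together with the mutual independence of the stage--$k$ paths from the past. Splitting off $e$ and applying Feynman--Kac once more to each basis function $l_n^{\frac{\alpha}2}(x)h_p(t)$, the cardinal identity $l_n^{\frac{\alpha}2}(x_i)h_p(t_j)=\delta_{in}\delta_{jp}$ forces the clean part of each summand to collapse, summing to $u(x_i,t_j)-u^{(k-1)}_\ast(x_i,t_j)$; this cancels the target value $u(x_i,t_j)$ and leaves only the quadrature remainders
\[
\sum_{n,p}\big(u(x_n,t_p)-u^{(k-1)}_\ast(x_n,t_p)\big)\,\mathbb{E}^{(k-1)}\big[e\big(\mathcal{A}(l_n^{\frac{\alpha}2}h_p),x_i,t_j\big)\big].
\]
Estimating this sum by $N_xN_t\,E^\infty_{k-1,T}\max_{i,j}\big|\mathbb{E}^{(k-1)}[e(\mathcal{A}(l_n^{\frac{\alpha}2}h_p),x_i,t_j)]\big|$ identifies the contraction factor $\widetilde\rho$. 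Taking absolute values, then the pointwise maximum over $(x_i,t_j)$, and finally the outer expectation via the tower property \eqref{eyy} together with $|\mathbb{E}[\mathbb{E}^{(k-1)}[\cdot]]|\le\mathbb{E}|\mathbb{E}^{(k-1)}[\cdot]|$, would assemble the three contributions into \eqref{eqn:thm4.1_1}.

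The main obstacle, and the genuinely new feature compared with Theorem~\ref{thm3.3}, is the control of the error functional $e$. In the elliptic case $e$ measured only the inner Monte Carlo error $\mathbb{E}^\ast_{\widetilde{\mathbb{Q}}}-\mathbb{E}_{\widetilde{\mathbb{Q}}}$ and decayed like $M_1^{-1/2}$; here $e(f,x,t)=\mathcal{Q}^i_L[f]-\int_0^{t\wedge\tau_{\Omega}^{x}}f(X^\alpha_s,s)\,\mathrm{d}s$ is the bias of the trapezoidal time quadrature \eqref{interIQ} along a single walk, whose expectation must be quantified through the time step $\Delta t_n=t_n/N$ and the smoothness of $\mathcal{A}$ acting on the (spectrally reconstructed, hence smooth) basis functions $l_n^{\frac{\alpha}2}(x)h_p(t)$. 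Showing that $\widetilde\rho=N_xN_t\max_{i,j}|\mathbb{E}[e(\mathcal{A}(l_n^{\frac{\alpha}2}h_p),x_i,t_j)]|<1$ on a fine enough discretization is thus the crux; a secondary point is that the parabolic Feynman--Kac reconstruction must now reproduce the cardinal values $\delta_{in}\delta_{jp}$ while simultaneously accounting for the source, lateral boundary, and initial data.
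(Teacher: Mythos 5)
Your proposal is correct and follows essentially the same route as the paper: the paper's own proof simply transcribes the argument of Theorem~\ref{thm3.3} to the space--time setting, writing down the analogue of \eqref{err_E_k-1} with $\ina\mathcal{I}_{N_t}$, $\Psi^\ast$, and $\mathcal{A}=\partial_t+(-\Delta)^{\frac{\alpha}{2}}$ in place of their elliptic counterparts, then takes absolute values, invokes Lemma~\ref{lem4.2}, and concludes via the tower property --- exactly the steps you spell out (in fact in more detail than the paper, which leaves the $B_1$/$B_2$ split implicit). Your closing observation that the parabolic error functional $e$ is a time-quadrature bias rather than an inner Monte Carlo error, and that bounding $\widetilde\rho<1$ is the genuine crux, is also consistent with the paper, which defers that point to the remark following the theorem rather than proving it inside the theorem itself.
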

\begin{proof}
Since the proof for parabolic-type equations can be derived in a similar manner to that of Poisson-type equations, as shown in \eqref{err_E_k-1} in the proof of Theorem \ref{thm3.3}, we obtain that
\begin{small}
\begin{equation*}
\begin{aligned}
\mathbb{E}^{(k-1)}[u^{(k)}_\ast(x_i,t_j)-u(x_i,t_j)]  &= [u-\mathcal{I}_{N_x}^\frac{\alpha}{2}\mathcal{I}_{N_t}u](x_i,t_j) + \Big|\mathbb{E}\Big[e\big(\mathcal{A}(u-\mathcal{I}_{N_x}^\frac{\alpha}{2}\mathcal{I}_{N_t}u),x_i,t_j\big)\Big]\Big| \\
&+ \sum_{n=0}^{N_x}\sum_{p=0}^{N_t}(u(x_n,t_p)-u^{(k-1)}_\ast(x_n,t_p))\mathbb{E}^{(k-1)}\Big[e(\mathcal{A}(l_n^{\frac{\alpha}{2}}(x)h_p(t)),x_i,t_j)\Big].
\end{aligned}
\end{equation*}
\end{small}
Take absolute values on both sides of the above equation, we find that
\begin{equation*}
\begin{aligned}
&\big|\mathbb{E}^{(k-1)}[u^{(k)}_\ast(x_i,t_j)-u(x_i,t_j)]\big| \leq  \Big|[u-\mathcal{I}_{N_x}^\frac{\alpha}{2}\mathcal{I}_{N_t}u](x_i,t_j)\Big| +\Big|\mathbb{E}\Big[e\big(\mathcal{A}(u-\mathcal{I}_{N_x}^\frac{\alpha}{2}\mathcal{I}_{N_t}u),h,x_i,t_j\big)\Big]\Big| \\
&\quad +cN_xN_t\max_{n,p}\big|u(x_n,t_p)-u^{(k-1)}_\ast(x_n,t_p)\big|\big|\mathbb{E}^{(k-1)}[e(\mathcal{A}(l_n^{\frac{\alpha}{2}}(x)h_p(t)),h,x_i,t_j)]\big|.
\end{aligned}
\end{equation*}
Taking the expectation and pointwise maximum norm on both sides, and using the law
of total expectation \eqref{eyy} and the fact that $\big|\mathbb{E}[\mathbb{E}^{(k-1)}[\cdot]]\big| \leq \mathbb{E}\big|\mathbb{E}^{(k-1)}[\cdot]\big|$ leads to desired result \eqref{eqn:thm4.1_1}.
Clearly, \eqref{eqn:thm4.1_2} is a direct consequence of \eqref{eqn:thm4.1_1}. This completes the proof.
\end{proof}

\begin{remark}{\em
In the above Theorem, the convergence factor $\widetilde{\rho}$  depends on the error functional $\max_{i,j}\Big|\mathbb{E}[e(\mathcal{A}(l_n^{\frac{\alpha}{2}}(x)h_p(t)),x_i,t_j)]\Big|$. Similar to the proof of \eqref{EB1}, we can obtain $$\max_{i,j}\Big|\mathbb{E}[e(\mathcal{A}(l_n^{\frac{\alpha}{2}}(x)h_p(t)),x_i,t_j)]\Big|\leq cM^{-\frac12},$$ which implies that as $M\rightarrow \infty$ with fixed $N_x$ and $N_t$, we have $\widetilde{\rho}=cN_xN_tM^{-\frac12}\rightarrow0$.}
\end{remark}

\section{Numerical results} \label{sect5}\setcounter{lem}{0} \setcounter{thm}{0}  \setcounter{rem}{0} 	
In this section, we present ample numerical results to illustrate the efficiency and accuracy of our spectral and space-time spectral Monte Carlo methods for computing solutions of poisson problem \eqref{uf} and parabolic problem \eqref{ufg}, respectively. 
In what follows, we use the following discrete $L^\infty$-error for function $u(x)$ and $u(x,t)$ are
\begin{equation*}
E^\infty_N= \max_{i}\Big|\mathbb{E}\big(u(\hat x_i)-u_\star(\hat x_i)\big)\Big|, \;\;\;E^\infty_{N,T}= \max_{i,j}\Big|\mathbb{E}\big(u(\hat x_i,\hat t_j)-u_\star(\hat x_i,\hat t_j)\big)\Big|,
\end{equation*}
where $u_\star$ denotes the numerical solution, and $\{\hat{x}_i\}_{i=0}^{N_x}$ and $\{\hat{t}_j\}_{j=0}^{N_t}$ denote the quadrature points over interval $(-1,1)$ and $(0,T)$, respectively. 

\begin{exa}\label{Ex:1}{\rm (Accuracy test for Poisson equation with exact solution)} 
We first consider fractional Poisson equation \eqref{uf} with global homogeneous boundary condition $g(x)=0$ on $\Omega^c$ and the following exact solution 
\begin{equation}\label{solu1}
 u_1(x) = (1-x^2)^{\frac{\alpha}{2}}(x^2+x+1),\;\;\;  u_2(x) = (1-x^2)^{\frac{\alpha}{2}}\sin(x).
\end{equation}
The source function $f(x)$ associated with $u_1(x)$ can be obtained easily by using the derivative relation \eqref{derpol}. While for $u_2(x)$, the exact source function is unknown, so we can approximate the exact solution $u_2(x)$ by using GJFs and again employ the derivative relation \eqref{derpol} to obtain the corresponding source function with machine precision.
\end{exa}

It is evident that both solutions exhibit singularities at the boundary $\partial\Omega$, and we employ the proposed spectral Monte Carlo method to calculate numerical solutions. In Fig.\ref{fig:1.1}(left), we plot the discrete maximum error, in semi-log scale, at random points (i.e.,$E^\infty_N$) for $u_1(x)$ against iteration number $K$, for which we take the number of simulation paths $M=50$, the degree of GJFs to be $N_x=2$, and various fractional power $\alpha=0.4,0.8,1.2,1.6,2$. We observe that even for non-smooth solution the error decays exponentially with respect to the iteration number $K$ as expected. We were pleasantly surprised to find that the number of iterations $K$ required to achieve spectral accuracy (about $10^{-16}$) decreases as $\alpha$ decreases. 
The discrete maximum point-wise errors are displayed in Fig.\ref{fig:1.1}(right), and once again, the exponential convergence is observed. Here we take the degree of GJFs to be $N_x=2$, fractional power $\alpha =0.4$, and the number of paths $M=2, 10, 100$. This finding suggests that by increasing the number of simulation paths $M$, we can achieve spectral accuracy with fewer iteration steps. 
\begin{figure}[htbp]
\centering  
\subfigure{
\includegraphics[width=0.46\textwidth]{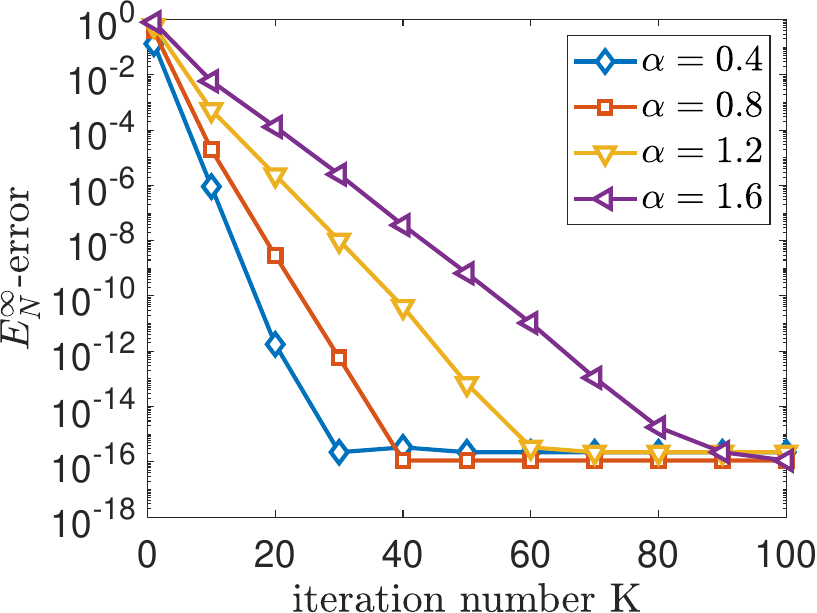}}
\subfigure{
\includegraphics[width=0.44\textwidth]{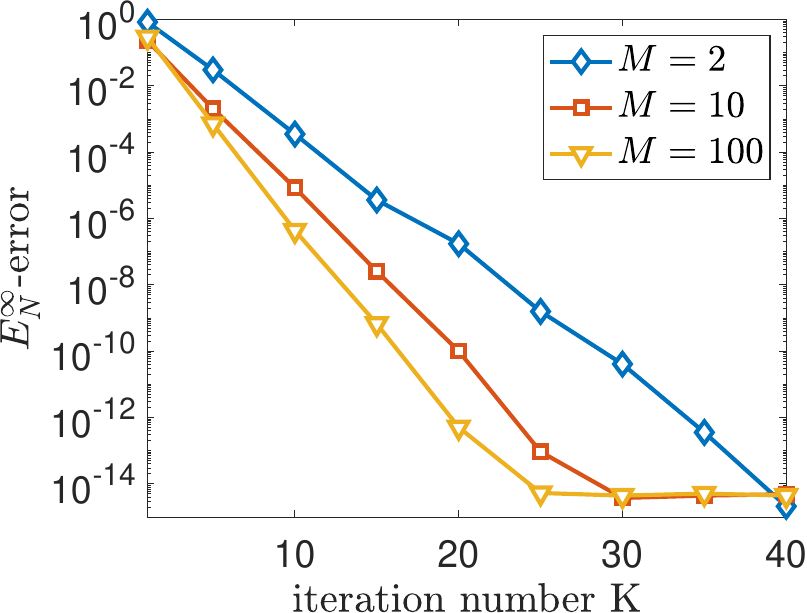}}
\caption{The discrete $E_N^\infty$-errors for $u_1(x)$ in \eqref{solu1} against the iteration number $K$. Left: $M = 50, N_x =2$; Right: $\alpha =0.4, N_x = 2$. }
\label{fig:1.1}
\end{figure}

In Fig.\ref{fig:1.2}(left), we plot discrete maximum error, in semi-log scale, for $u_2(x)$ against the degree of GJFs $N_x$. Here we take $M=10$ and various $\alpha=0.4,0.8,1.2,1.6,2$. The expected exponential convergence is also observed in this setting. 
Moreover, we plot the discrete maximum point-wise errors in Fig.\ref{fig:1.2}(right), where the error decays exponentially with respect to iteration number $K$. In this computation, we take $\alpha =0.4$ and the number of paths to be $M=2, 10, 100$. We observe similar behaviors as in the case of $u_1(x)$, where increasing the number of simulation paths $M$ allows us to achieve spectral accuracy with fewer iteration steps.

\begin{figure}[htbp]
\centering  
\subfigure{
\label{Fig.1.1}
\includegraphics[width=0.45\textwidth]{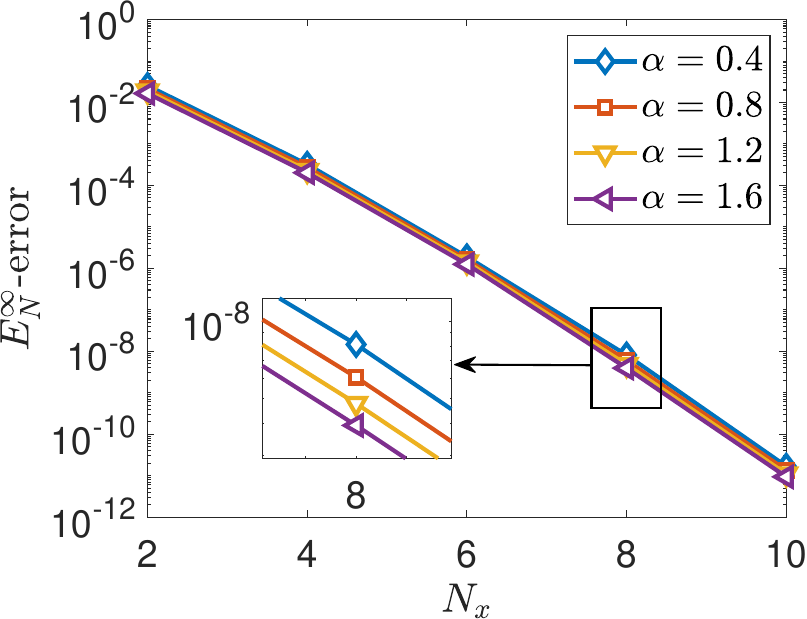}}
\subfigure{
\label{Fig.1.2}
\includegraphics[width=0.45\textwidth]{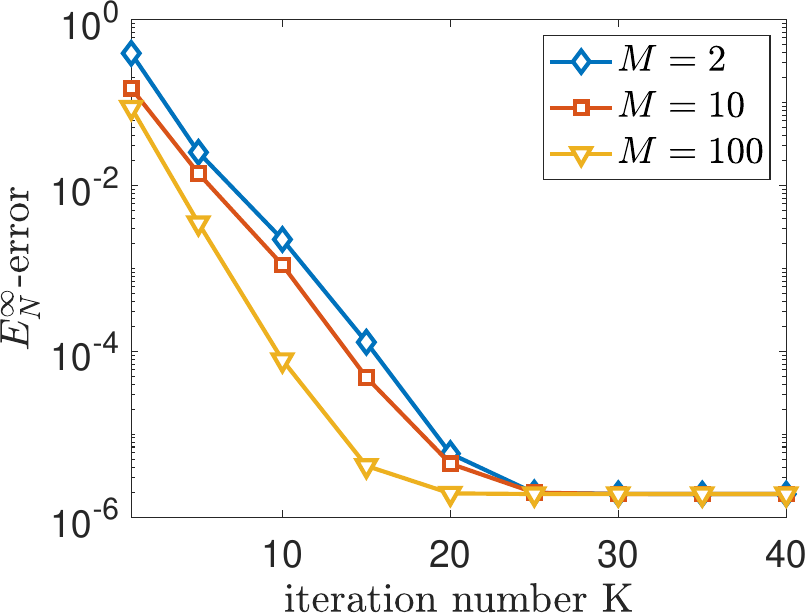}}
\caption{The discrete $E_N^\infty$-errors for $u_2(x)$ in \eqref{solu1}. Left: with fixed $M = 10$; Right: take $\alpha = 0.4, N_x =6$. }
\label{fig:1.2}
\end{figure}

\begin{exa}\label{ex1}{\rm(Given source term $f(x)$ for Poisson equation)} Next, we consider Poisson equation \eqref{uf} with the global homogeneous boundary condition $g(x) = 0$ on $\Omega^c$, and the right-hand side source term is given by $f(x)=\sin(x)$. 
\end{exa}
\begin{figure}[htbp]
\centering  
\subfigure{
\includegraphics[width=0.45\textwidth]{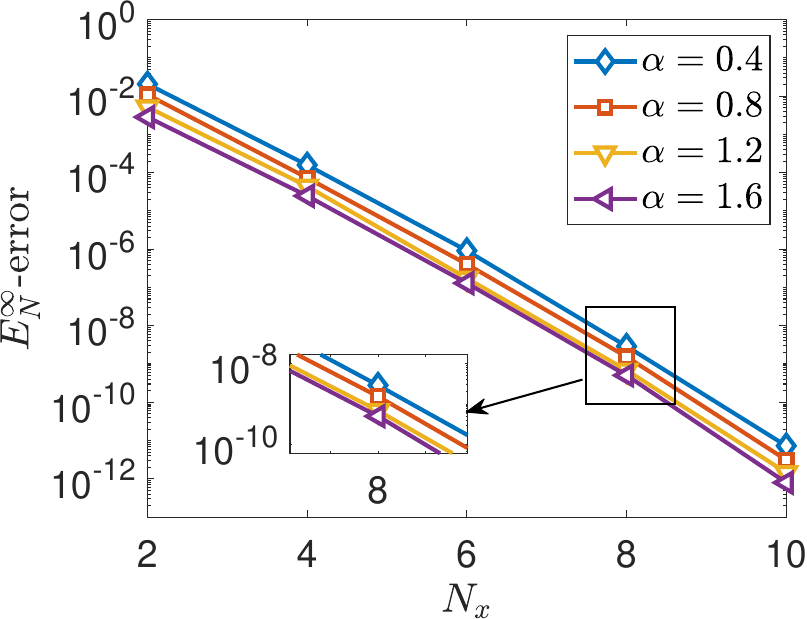}}
\subfigure{
\includegraphics[width=0.45\textwidth]{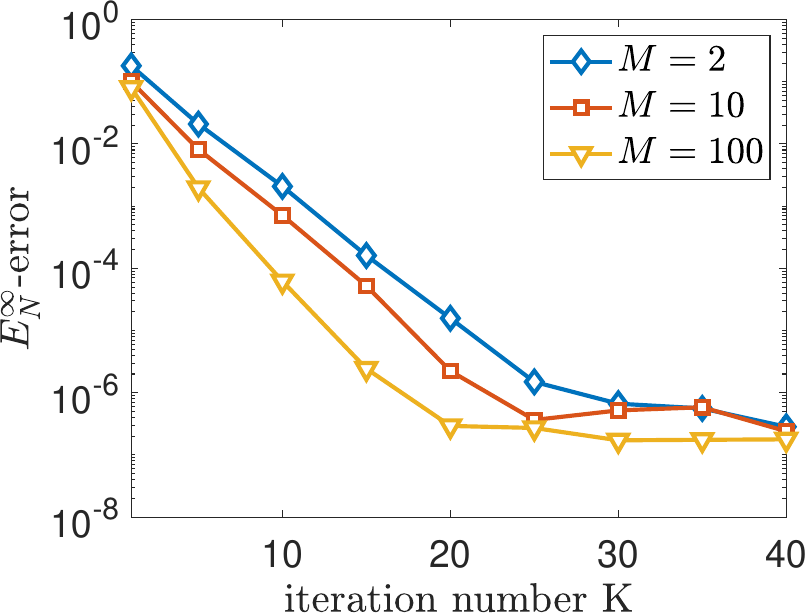}}
\caption{The discrete $E_N^\infty$-errors for example 2. Left: $M = 10$; Right: $\alpha=0.4, N_x=6$. }
\label{fig:2.1}
\end{figure}

Since the exact solution is unknown, we use the GJFs-Galerkin spectral method (cf. \cite{Chen2018}) with $N_x=100$ to obtain the numerical solution as the reference solution. 
In our simulation, we use the proposed spectral Monte Carlo method to obtain numerical solutions and take the number of simulation paths $M=10$ and various $\alpha=0.4,0.8,1.2,1.6,2$. In Fig.\,\ref{fig:2.1} (left), we present the discrete $L^\infty$-error, in semi-log scale, versus the number of modes $N_x$. This plot clearly indicates the exponential convergence of our proposed spectral Monte Carlo method for any given smooth source function. Again, on the right of Fig.\,\ref{fig:2.1}, we plot the discrete maximum point-wise errors versus iteration number $K$. We can observe a similar phenomenon of exponential convergence and a similar relationship between the number of simulation paths $M$ and the number of iterations $K$.  

\begin{exa}{\rm (Accuracy test for Parabolic equation with exact solution)} We consider parabolic equation {\rm \eqref{ufg}} with global homogeneous boundary condition $g(x,t)=0$ on $\Omega^c\times(0,T)$ and the following exact solution 
\begin{equation}\label{solupara}\begin{split}
 &u_1(x,t) = (1-x^2)^{\frac{\alpha}{2}}(x^2+x+1)\cos(t),\;\;\; 
u_2(x,t) = (1-x^2)^{\frac{\alpha}{2}}\sin(x)\cos(t),
 \end{split}\end{equation}
 where we take $\Omega\times(0,T):=(-1,1)\times(0,0.5).$
The source function $f(x,t)$ for the above solutions can be computed in a similar fashion to that of the Poisson equation. 
\end{exa}

We use the proposed space-time spectral Monte Carlo method to evaluate numerical solutions. 
In Fig.\ref{fig:3.1}(left), we depict the discrete maximum error, in semi-log scale, for $u_1(x,t)$, where we fixed the number of path $M=50$, the degree of modes $N_x=2,N_t = 6$, and fractional power $\alpha =0.4, 0.8,1.2,1.6,2$. It is seen that the error decays exponentially with respect to $K$ for the parabolic equation. 
In Fig.\ref{fig:3.1}(right), we plot maximum error, in semi-log scale versus iteration number $K$, for which we take the number of paths $M=2, 10, 100$, the degree of modes $N_x=6, N_t=6$, $\alpha=0.4$, and $T=0.5$. As with Poisson equation, it has been proven that setting a smaller fractional index $\alpha$ leads to faster convergence. Additionally,  we observe from Fig.\ref{fig:3.1} that increasing the number of simulation paths enables quicker convergence.  
\begin{figure}[htbp]
\centering  
\subfigure{
\includegraphics[width=0.45\textwidth]{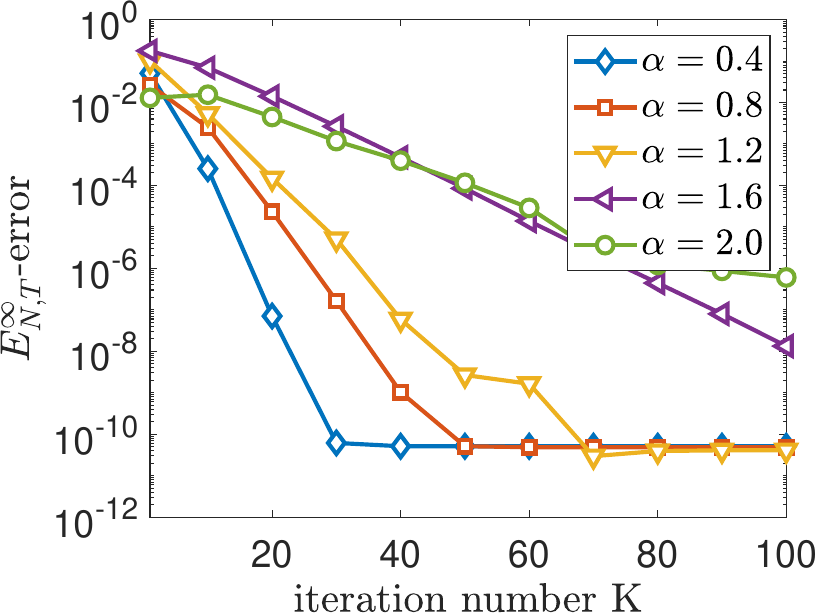}}
\subfigure{
\includegraphics[width=0.45\textwidth]{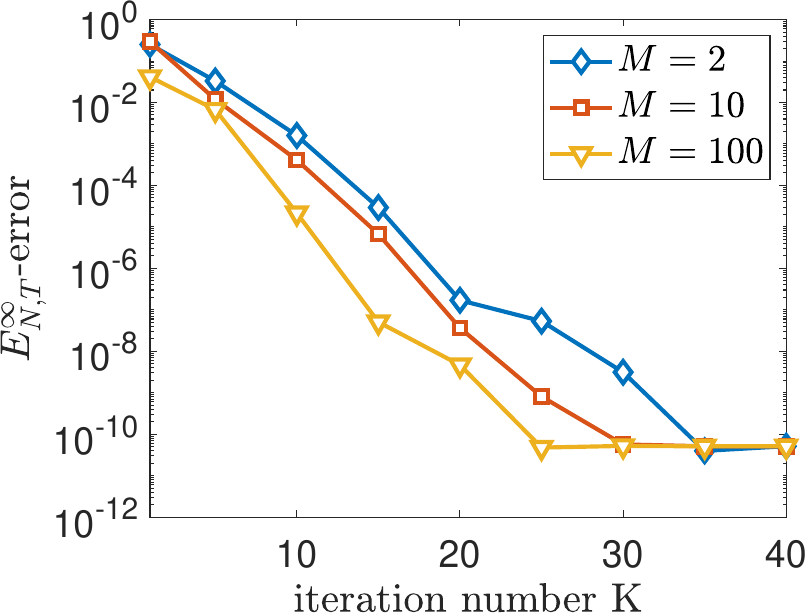}}
\caption{The discrete $E_{N,T}^\infty$-errors for $u_1(x,t)$ in \eqref{solupara}. Left: with fixed $N_x=2,N_t=6,M=50$; Right: take $N_x=6,N_t=6,\alpha=0.4$.}
\label{fig:3.1}
\end{figure}

\begin{figure}[htbp]
\centering  
\subfigure{
\includegraphics[width=0.45\textwidth]{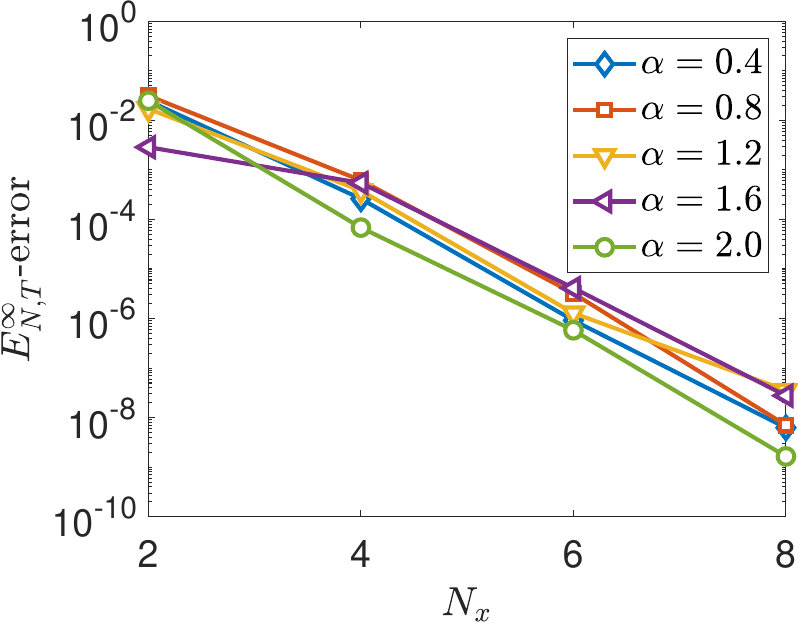}}
\subfigure{
\includegraphics[width=0.45\textwidth]{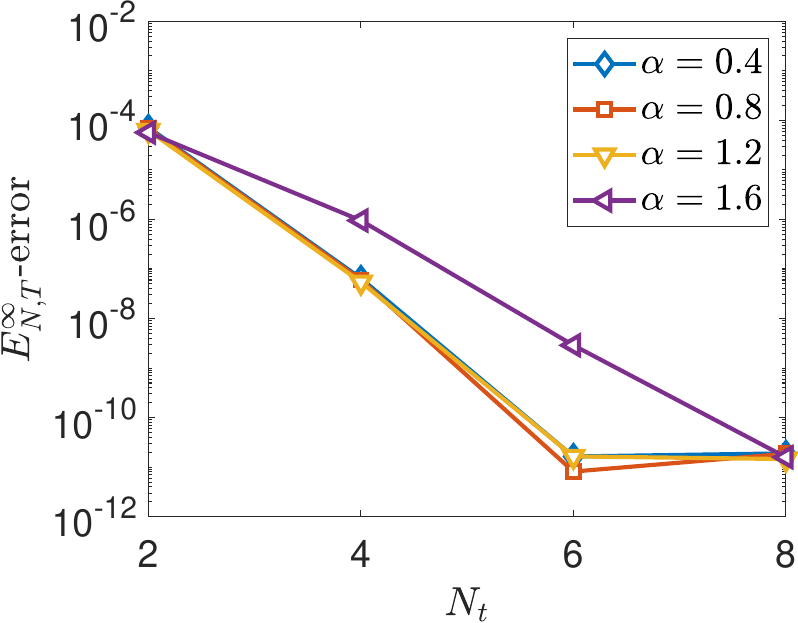}}
\caption{The discrete $E_{N,T}^\infty$-errors for $u_2(x,t)$ in \eqref{solupara}. Left:  $N_t=5,M=2$; Right: take $N_x=10,M=2$. }
\label{fig:3.2}
\end{figure} 
For $u_2(x,t)$, we take the number of path $M=2$, the degree of modes $N_x=2,4,6,8,N_t = 5$, and various $\alpha =0.4, 0.8,1.2,1.6$. In Fig.\ref{fig:3.2}, we plot the maximum error, in semi-log scale, versus the degree of spatial modes $N_x$ and temporal modes $N_t$, respectively. 
As we can see from Fig.\ref{fig:3.2}, both the numerical errors in spatial and temporal direction can achieve spectral accuracy against various $N_x$ and $N_t$. 
These numerical results demonstrate that our space-time Monte Carlo method can achieve the same level of approximation accuracy as classical spectral methods even for non-smooth solutions. The only difference is that our proposed method does not require solving linear systems of equations.
 
\begin{exa}\label{Ex:0.5}{\rm (Extended to classical Parabolic equation in three-dimensions.)} 
We consider the parabolic equation \eqref{uf} with $\alpha=2$  on the three-dimensional unit cube $\Omega = [-1, 1]^3$ with homogeneous Dirichlet boundary conditions. We choose the exact solution as
\begin{equation*}
    u(\bx, t) = \exp\Big(\sum_{i=1}^3\frac{x_i}{8}\Big)\prod_{i=1}^3(1-x_i^2)\cos(t), \quad \bx = (x_1,x_2,x_3) \in  [-1, 1]^3.
\end{equation*}
\end{exa}

In this example, we focus on the special case $\alpha=2$. Similarly, we employ Jacobi-Gauss points with index $(1,1)$ in the spatial direction and Legendre-Gauss-Radau points in the temporal direction. More precisely, the basis functions in the spatial direction are constructed in a tensor product form 
$$l_{n_1,n_2,n_3}(x_1,x_2,x_3) =l^{(1,1)}_{n_1}(x_1)l^{(1,1)}_{n_2}(x_2)l^{(1,1)}_{n_3}(x_3),\;\;\; 0\leq n_1,n_2,n_3\leq N_x,$$
where the generalized Lagrange interpolating function $l^{(1,1)}_{n_\ell}(x_\ell)$ is defined in \eqref{fracLagbasxx} along the $x_\ell$-direction, and the shifted Legendre basis functions $h_j(t)$, introduced in Section \ref{subsect:ST spectral interpolation}, are used for the temporal direction.  
For convenience, we compute the maximum error, also denoted by $E^{\infty}_{N,T}$, at randomly selected points along each spatial direction at $T=0.2$. 
We further denote by $L$ the number of iterations required to achieve convergence for a given $M$.
As expected, the numerical results presented in Fig. \ref{fig:4.1} demonstrate that the numerical error decreases as the number of spatial and temporal nodes, $N_x$ and $N_t$, increases. Moreover, appropriately increasing the number of sample paths $M$ can to some extent enhance the convergence rate, thereby reducing the number of iterations $L$ required to achieve the desired accuracy.  
Numerical results indicate that to ensure the convergence of the proposed algorithm, the number of sample paths $M$ must be sufficiently large relative to $N_x$ and $N_t$.

\begin{figure}[htbp]
\centering  
\subfigure{
\includegraphics[width=0.45\textwidth]{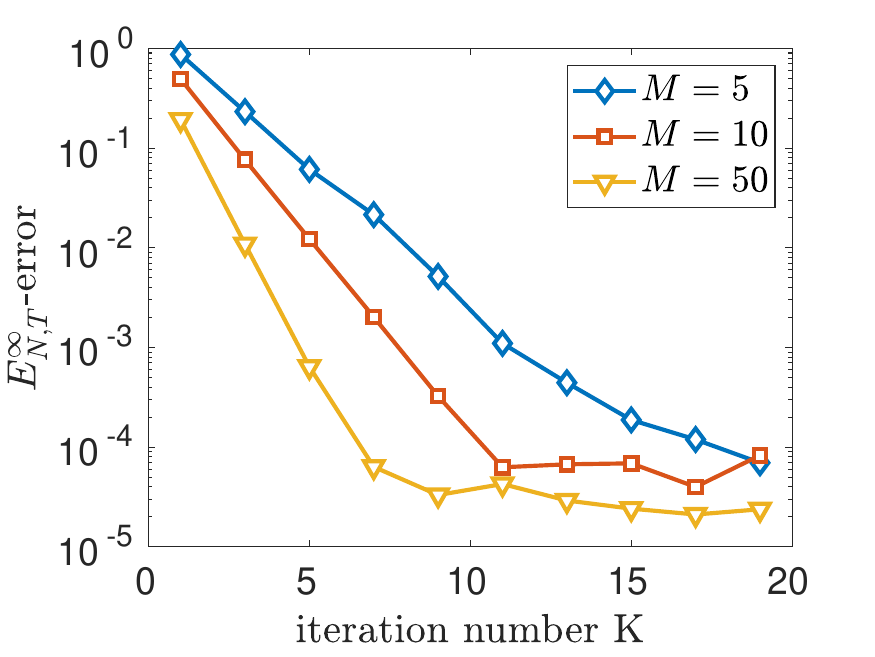}}
\subfigure{
\includegraphics[width=0.45\textwidth]{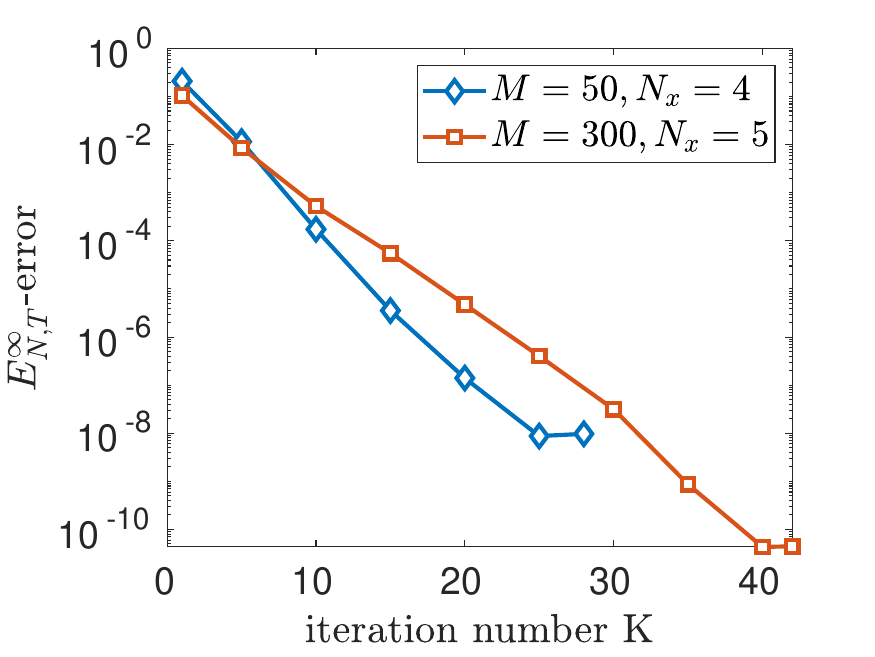}}
\caption{The discrete $E_{N,T}^\infty$-errors for $u(\bx,t)$. Left:  $N_x=2,N_t=6$; Right: take $N_t=6$. }
\label{fig:4.1}
\end{figure} 
 
\section{Conclusions}
We have proposed a new spectral Monte Carlo method for solving the Poisson equation driven by $\alpha$-stable Lévy process and successfully extended it to solve the parabolic equation driven by  $\alpha$-stable Lévy process using a space-time Monte Carlo method. Both of these methods are theoretically proven to exhibit exponential convergence, and the numerical results have further confirmed this. Unlike traditional methods, the proposed stochastic approaches bypass the need to solve linear systems and allow for parallel computations of numerical solutions at each grid point. This parallelization is particularly beneficial for parabolic equations, where simultaneous computations can be performed in both the space and time directions. 
Moreover, compared to existing works, our method has a wider applicability range, and importantly, it includes the classical methods in the limit case with $\alpha=2$, so we claim it as the unified method.  While this paper only provided numerical results in one dimension, it is worth noting that by replacing the generalized Jacobi functions with a suitable set of interpolation functions, one potential candidate is radial basis functions. The proposed methods have great potential for generalization to higher-dimensional cases, and we leave this extension as part of our future work.

\vspace{18pt}

\noindent{\bf Declarations}

\begin{itemize}
  \item {\bf Availability of data and materials:}  The datasets generated during and/or analysed during the current study are available from the corresponding author on reasonable request. 
  \item {\bf Authors' contributions:} All authors contributed to this study. The computations and the first draft were prepared by the first and second authors. All authors read and approved the final manuscript.
\item {\bf Conflict of interest statement:}   We have no conflicts of interest to disclose.
\end{itemize}

\vspace{18pt}

\bibliographystyle{siamplain}

\end{document}